\documentclass[12pt]{amsart}
\usepackage{amsmath,amssymb,amsbsy,amsfonts,amsthm,latexsym, mathtools,amsopn,amstext,amsxtra,euscript,amscd,mathrsfs,color,bm, cite,multirow,tabularx}
\usepackage{graphicx} 
\usepackage{todonotes,dirtytalk}   
\usepackage{url}
\usepackage[colorlinks,linkcolor=blue,anchorcolor=blue,citecolor=blue]{hyperref}
\usepackage{color}
\usepackage{comment}
\allowdisplaybreaks

\DeclareMathOperator{\li}{li}

\begin{document}
\newtheorem{problem}{Problem}
\newtheorem{theorem}{Theorem}
\newtheorem{lemma}[theorem]{Lemma}
\newtheorem{crit}[theorem]{Criterion}
\newtheorem{claim}[theorem]{Claim}
\newtheorem{cor}[theorem]{Corollary}
\newtheorem{prop}[theorem]{Proposition}
\newtheorem{definition}{Definition}
\newtheorem{question}[theorem]{Question}
\newtheorem{Note}[theorem]{Notation}

\theoremstyle{remark} 
\newtheorem*{note}{Note}
\newtheorem*{rem}{Remark}

\def\cA{{\mathcal A}}
\def\cB{{\mathcal B}}
\def\cC{{\mathcal C}}
\def\cD{{\mathcal D}}
\def\cE{{\mathcal E}}
\def\cF{{\mathcal F}}
\def\cG{{\mathcal G}}
\def\cH{{\mathcal H}}
\def\cI{{\mathcal I}}
\def\cJ{{\mathcal J}}
\def\cK{{\mathcal K}}
\def\cL{{\mathcal L}}
\def\cM{{\mathcal M}}
\def\cN{{\mathcal N}}
\def\cO{{\mathcal O}}
\def\cP{{\mathcal P}}
\def\cQ{{\mathcal Q}}
\def\cR{{\mathcal R}}
\def\cS{{\mathcal S}}
\def\cT{{\mathcal T}}
\def\cU{{\mathcal U}}
\def\cV{{\mathcal V}}
\def\cW{{\mathcal W}}
\def\cX{{\mathcal X}}
\def\cY{{\mathcal Y}}
\def\cZ{{\mathcal Z}}

\def\A{{\mathbb A}}
\def\B{{\mathbb B}}
\def\C{{\mathbb C}}
\def\D{{\mathbb D}}
\def\E{{\mathbb E}}
\def\F{{\mathbb F}}
\def\G{{\mathbb G}}
\def\I{{\mathbb I}}
\def\J{{\mathbb J}}
\def\K{{\mathbb K}}
\def\L{{\mathbb L}}
\def\M{{\mathbb M}}
\def\N{{\mathbb N}}
\def\O{{\mathbb O}}
\def\P{{\mathbb P}}
\def\Q{{\mathbb Q}}
\def\R{{\mathbb R}}
\def\S{{\mathbb S}}
\def\T{{\mathbb T}}
\def\U{{\mathbb U}}
\def\V{{\mathbb V}}
\def\W{{\mathbb W}}
\def\X{{\mathbb X}}
\def\Y{{\mathbb Y}}
\def\Z{{\mathbb Z}}

\def\ep{{\mathbf{e}}_p}
\def\eq{{\mathbf{e}}_q}
\def\cal#1{\mathcal{#1}}

\def\scr{\scriptstyle}
\def\\{\cr}
\def\({\left(}
\def\){\right)}
\def\[{\left[}
\def\]{\right]}
\def\<{\langle}
\def\>{\rangle}
\def\fl#1{\left\lfloor#1\right\rfloor}
\def\rf#1{\left\lceil#1\right\rceil}
\def\le{\leqslant}
\def\ge{\geqslant}
\def\eps{\varepsilon}
\def\mand{\qquad\mbox{and}\qquad}

\def\sssum{\mathop{\sum\ \sum\ \sum}}
\def\ssum{\mathop{\sum\, \sum}}
\def\ssumw{\mathop{\sum\qquad \sum}}

\def\vec#1{\mathbf{#1}}
\def\inv#1{\overline{#1}}
\def\num#1{\mathrm{num}(#1)}
\def\dist{\mathrm{dist}}

\def\fA{{\mathfrak A}}
\def\fB{{\mathfrak B}}
\def\fC{{\mathfrak C}}
\def\fU{{\mathfrak U}}
\def\fV{{\mathfrak V}}

\newcommand{\bflambda}{{\boldsymbol{\lambda}}}
\newcommand{\bfxi}{{\boldsymbol{\xi}}}
\newcommand{\bfrho}{{\boldsymbol{\rho}}}
\newcommand{\bfnu}{{\boldsymbol{\nu}}}

\def\GL{\mathrm{GL}}
\def\SL{\mathrm{SL}}

\def\Hba{\overline{\cH}_{a,m}}
\def\Hta{\widetilde{\cH}_{a,m}}
\def\Hb1{\overline{\cH}_{m}}
\def\Ht1{\widetilde{\cH}_{m}}

\def\flp#1{{\left\langle#1\right\rangle}_p}
\def\flm#1{{\left\langle#1\right\rangle}_m}

\def\Zm{\Z/m\Z}

\def\Err{{\mathbf{E}}}
\def\O{\mathcal{O}}

\def\cc#1{\textcolor{red}{#1}}
\newcommand{\commT}[2][]{\todo[#1,color=green!60]{Tim: #2}}
\newcommand{\commB}[2][]{\todo[#1,color=red!60]{Bryce: #2}}

\DeclarePairedDelimiter\ceil{\lceil}{\rceil}
\DeclarePairedDelimiter\floor{\lfloor}{\rfloor}

\newcommand{\comm}[1]{\marginpar{%
\vskip-\baselineskip 
\raggedright\footnotesize
\itshape\hrule\smallskip#1\par\smallskip\hrule}}
\newcolumntype{L}{>{\raggedright\arraybackslash}X}

\def\xxx{\vskip5pt\hrule\vskip5pt}

\def\dmod#1{\,\left(\textnormal{mod }{#1}\right)}


\title{Improved explicit Burgess-strength bounds for composite moduli}

\author[J. Punch]{James Punch}\thanks{}
\address{School of Science, The University of New South Wales Canberra, Australia}
\email{j.punch@unsw.edu.au}

\pagenumbering{arabic}

\begin{abstract}
Consider the character sum $S(M,N)=\sum_{n=M+1}^{M+N}\chi(n)$ for a non-principal Dirichlet character $\chi(n)$ to a (possibly composite) modulus $q$. Burgess \cite{Burgess1963} proved that $S(M,N)\ll N^{1/2}q^{3/16+\varepsilon}$, for any $\varepsilon>0$. This was made explicit by Jain-Sharma et al.\ in \cite{JSKL2021}, but only for moduli $q\geq \exp(\exp(9.594))\approx 8.03\times 10^{6373}$. We prove a stronger result that holds when $q\geq 1.627\times 10^{19}$.

\end{abstract}

\maketitle
\let\thefootnote\relax
\footnote{\textit{Affiliation}: School of Science, University of New South Wales, Canberra.}
\footnote{\textit{Corresponding author}:}
\footnote{\textit{MSC2020}:}
\footnote{\textit{Keywords}:}

\section{Introduction}
It is of interest in many number-theoretic problems to study sums of Dirichlet characters over a set of consecutive integers. For a non-principal character $\chi$ whose modulus is some (possibly composite) integer $q$, define $$S(M,N):=\sum_{n=M+1}^{M+N}\chi(n).$$ A trivial upper bound for this sum's modulus is $N$, which tends to be best for small $N$. For large $N$, we instead employ the P\'olya--Vinogradov inequality, which states that $S(M,N)\ll \sqrt{q}\log{q}$. An improvement for moderately large $N$ was given by Burgess in \cite{Burgess1963}. Let $r\geq 2$ be an integer; he proved that if $q$ is cubefree or $r=2$, then \begin{align}\label{eq:BurgessComposite}S(M,N)\ll N^{1-\frac{1}{r}}q^{\frac{r+1}{4r^2}+\varepsilon}\end{align} for any $\varepsilon>0$. Further, if $q$ is prime, then $$S(M,N)\ll N^{1-\frac{1}{r}}q^{\frac{r+1}{4r^2}}\log{q}.$$ The power of $\log{q}$ has since been improved; to the author's knowledge, the best published result is that of de la Bret\`eche et al. \cite{delaBreteche2021}, who gave $(\log{q})^{\eta/2r}$ for $\eta\approx 0.16656$.

Since these bounds tend to be useful in applications (e.g.\ bounding least primitive roots, studying Dirichlet $L$-functions, computing class groups and ranks of elliptic curves etc.), we would like to have explicit versions of these results. For a quadratic character to a prime modulus $q$, Booker \cite{Booker2006} gave upper bounds of the form $\alpha(r)N^{1-\frac{1}{r}}q^{\frac{r+1}{4r^2}}(\beta(r)+\log{q})^{\frac{1}{2r}}$ for integers $2\leq r \leq 15$ and primes $q\geq q_0$, with explicit values for $\alpha(r)$, $\beta(r)$ and $q_0$. Francis's upper bounds in \cite{Francis2021} (which improve on work of McGown \cite{McGown2012} and Trevi\~no \cite{Trevino2015}) are similar but valid for all non-principal characters: $\alpha(r)N^{1-\frac{1}{r}}q^{\frac{r+1}{4r^2}}(\log{q})^\frac{1}{2r}$ for $2\leq r \leq 10$, primes $q\geq q_0(r)$, with $\alpha(r)$ and $q_0(r)$ explicit.

Less has been achieved in the case where $q$ is allowed to be composite. When $r=2$, Jain-Sharma et al.\ provided the following bound, which is an explicit version of \eqref{eq:BurgessComposite}. Let $\omega(q)$ be the number of distinct primes dividing $q$, $d(q)$ the number of divisors of $q$, and $\varphi(q)$ the number of integers coprime to $q$ in $[1,q-1]$.
\begin{theorem} \cite[Theorem 1.1]{JSKL2021}\label{prop:JainSharma}
    Let $\chi$ be a primitive character with modulus $q\geq \exp(\exp(9.594))$. Define $$\lambda_0=9.07 \cdot (8^{\omega(q)}d(q)^3)^{1/4}(\log{q})^{1/4}\left(\frac{q}{\phi(q)}\right)^{1/2}.$$ If $N\leq q^{5/8}$, then $$\lvert S(M,N)\rvert \leq N^{1/2} q^{3/16}\lambda_0.$$
\end{theorem}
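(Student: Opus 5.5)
This is the explicit Burgess bound with $r=2$ for composite $q$, and I would prove it along Burgess's route, keeping every constant explicit.

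\textbf{Step 1: shifting by products.} Fix parameters $A,B$ with $AB\le N$. For $1\le a\le A$ and $1\le b\le B$, shifting $n\mapsto n+ab$ changes $S(M,N)$ by at most $2ab$. Averaging over $a$ and $b$ gives
\[
|S(M,N)|\le \frac{1}{AB}\sum_{a\le A}\,\sum_{x \bmod q}\nu(x)\Bigl|\sum_{b\le B}\chi(x+b)\Bigr| + O(AB),
\]
where $\nu(x)$ counts pairs $(a,n)$ with $\bar a n\equiv x \pmod q$ and $\gcd(a,q)=1$. Restricting to such $a$ is what produces the factor $q/\varphi(q)$ in $\lambda_0$.

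\textbf{Step 2: H\"older.} Apply H\"older's inequality with exponents $(4/3,4)$, i.e.\ the $r=2$ case. This bounds the main term by
\[
\Bigl(\sum_x\nu(x)\Bigr)^{1/2}\Bigl(\sum_x\nu(x)^2\Bigr)^{1/4}\Bigl(\sum_x\Bigl|\sum_{b\le B}\chi(x+b)\Bigr|^4\Bigr)^{1/4}.
\]
The three factors are handled as follows.
\begin{itemize}
\item The first moment is trivial: $\sum_x\nu(x)\le AN$.
\item The second moment counts solutions of $a_1n_2\equiv a_2n_1 \pmod q$. Since $AN<q$ is arranged, this reduces to counting integer equalities $a_1n_2=a_2n_1$ plus a few wrap-arounds. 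It is bounded explicitly by about $AN\log A$, which is the source of the $(\log q)^{1/4}$.
\item The fourth moment is the core step, treated in Step 3.
\end{itemize}

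\textbf{Step 3: fourth moment (main obstacle).} Expanding the fourth power gives sums $\sum_x\chi\bigl((x+b_1)(x+b_2)\overline{(x+b_3)(x+b_4)}\bigr)$. By the Chinese remainder theorem these factor over the prime powers $p^k\parallel q$.
\begin{itemize}
\item For prime $p$, Weil's bound gives at most $3\sqrt p$ unless the rational function is a perfect $\mathrm{ord}(\chi_p)$-th power.
\item For prime powers one needs the explicit complete-sum bounds for polynomial arguments modulo $p^k$ (as in Burgess's composite treatment). Each local factor is bounded by roughly $C\,p^{k/2}\gcd(\Delta,p^k)^{1/2}$, where $\Delta$ involves the differences $b_i-b_j$.
\item Summing the gcd factors over tuples $(b_1,\dots,b_4)$ and separating the degenerate diagonal tuples gives a bound like $\sum_x|\cdot|^4\le 8^{\omega(q)}d(q)^3\,(B^2q+B^4q^{1/2})$ up to an explicit constant. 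The $8^{\omega(q)}$ comes from the local constants, and $d(q)^3$ from the divisor sums over gcds.
\end{itemize}
Getting the prime-power constants correct, including for $p=2$, is the step I expect to be hardest. Here I would quote or re-derive an explicit Postnikov/Weil-type estimate.

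\textbf{Step 4: optimisation.} Choose $B\approx q^{1/4}$ so that the two terms in the fourth moment balance, and $A\approx Nq^{-1/4}$, rounded so that $AB\le N$. Collecting the factors:
\begin{itemize}
\item the main term becomes $N^{1/2}q^{3/16}\,(8^{\omega(q)}d(q)^3\log q)^{1/4}(q/\varphi(q))^{1/2}$ times an absolute constant;
\item the error term $AB\approx N$ is absorbed using $N\le q^{5/8}$.
\end{itemize}
One must also check that the hypothesis $AN<q$, needed in the second-moment count, holds whenever $N\le q^{5/8}$. Finally, the hypothesis $q\ge\exp(\exp(9.594))$ is used to absorb lower-order terms and rounding losses, and to bound $\log A\le\log q$ cleanly. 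Together these yield the numerical constant $9.07$ for $\lambda_0$ in the statement.
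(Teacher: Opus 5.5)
\textbf{Main gap: the shift error.} Your argument fails at the shift error in Steps 1 and 4. You bound the effect of $n\mapsto n+ab$ trivially by $2ab$, so after averaging the error is of order $AB$. With $B\approx q^{1/4}$ and $A\approx Nq^{-1/4}$, this is of order $N$.

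That error is \emph{not} absorbed by $N^{1/2}q^{3/16}\lambda_0$ when $q^{3/8}<N\le q^{5/8}$. At $N=q^{5/8}$ you would need $q^{1/8}\ll\lambda_0$. This fails, for example, for prime $q$, where $\lambda_0\asymp(\log q)^{1/4}$.

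Re-choosing $A,B$ does not rescue it. Up to logarithms, your H\"older bound for the main term is $N^{3/4}A^{-1/4}(q/B^2+q^{1/2})^{1/4}$, and this is $\ge N^{3/4}(AB)^{-1/4}q^{3/16}$ for every $B$. So with a trivial error of size $AB$, the best you can get is about $N^{3/5}q^{3/20}$. That is below $N^{1/2}q^{3/16}$ only when $N\le q^{3/8}$, which is exactly the range where $\lvert S\rvert\le N$ already suffices.

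The missing idea is Burgess's recursion (equivalently, induction on $N$). It is used in \cite{JSKL2021} and reproduced in this paper's proof of Theorem~\ref{prop:mainTheorem}:
\begin{enumerate}
\item Bound the shift error by $2\mathcal{M}(ab)\le 2\mathcal{M}(AB)$, where $\mathcal{M}(y)=\max_{M,\,N'\le y}\lvert S(M,N')\rvert$.
\item Take $A=\lfloor\frac{1}{10}Nq^{-1/4}\rfloor$ and $B=\lfloor q^{1/4}\rfloor$, so that $AB\le N/10$.
\item Deduce $\mathcal{M}(N)\le c\,N^{1/2}q^{3/16}(8^{\omega(q)}d(q)^3\log q)^{1/4}(q/\varphi(q))^{1/2}+2\mathcal{M}(N/10)$.
\item Iterate. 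Since $2/\sqrt{10}<1$, the resulting series converges, and the base case $N\le q^{3/8}$ is trivial.
\end{enumerate}
The factor $(1-2/\sqrt{10})^{-1}$ is absorbed into the numerical constant.

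\textbf{Two smaller points.} First, Step 1 is not a valid averaging identity as written. Terms with $\gcd(a,q)>1$ cannot be rewritten as $\chi(a)\chi(\bar a n+b)$. You must average only over $a\le A$ coprime to $q$ and divide by $A_qB$, where $A_q=\#\{a\le A:\gcd(a,q)=1\}$. Also drop the extra outer $\sum_{a\le A}$, since $\nu$ already counts $a$.

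The factor $(q/\varphi(q))^{1/2}$ then comes from $A/A_q\le 2q/\varphi(q)$. This lower bound for $A_q$ (Proposition~\ref{prop:JainSharma2.3}) is exactly where $q\ge\exp(\exp(9.594))$ enters. The error in $A_q\approx A\varphi(q)/q$ can be as large as $2^{\omega(q)-1}$, while $A$ can be as small as about $q^{1/8}/10$. The hypothesis is not about rounding losses, and $\log A\le\log q$ needs no hypothesis at all.

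Second, in the second moment $n_1,n_2$ lie in $[M+1,M+N]$ with $M$ arbitrary, so you cannot reduce to $a_1n_2=a_2n_1$. Instead, write $n_i=M+n_i'$. Then use $AN<q/2$ to force $a_2n_1'-a_1n_2'$ to equal the representative of $(a_1-a_2)M$ in $(-q/2,q/2]$, as in Lemma~\ref{prop:secondMoment}.
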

This paper's objective is to establish the following improvement to Theorem \ref{prop:JainSharma}.
\begin{theorem} \label{prop:mainTheorem}
    Let $\chi$ be a primitive character with modulus $q\geq 1.627\times 10^{19}$. If $p_1^{r_1}\ldots p_s^{r_s}$ is the prime factorisation of $q$, then let $T(q)=\binom{r_1+3}{3}\ldots \binom{r_s+3}{3}$. Define $$\lambda_1=4.0733\cdot (8^{\omega(q)}T(q))^{1/4}(2.578+\log{q})^{1/4}.$$ If $N\leq q^{5/8}$, then $$\lvert S(M,N)\rvert \leq N^{1/2} q^{3/16} \lambda_1.$$
\end{theorem}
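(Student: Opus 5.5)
We follow Burgess's method for $r=2$, keeping every constant explicit, and argue by induction on $N$.

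\textbf{Set-up.} Suppose the bound $\lvert S(M',N')\rvert\le N'^{1/2}q^{3/16}\lambda_1$ holds for all $N'<N$ and all $M'$. The base range is $N\le \lambda_1^2 q^{3/8}$, where the trivial bound $N\le N^{1/2}q^{3/16}\lambda_1$ already suffices.

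\textbf{Shifting.} For $N>\lambda_1^2q^{3/8}$ we choose parameters $A,B$ with $AB\le N$, say $A\asymp N q^{-1/4}$ and $B\asymp q^{1/4}$ up to explicit constants. For each $1\le a\le A$ and $1\le b\le B$ we write
\[
S(M,N)=\sum_{n=M+1}^{M+N}\chi(n+ab)+\bigl(\text{two sums of length }ab\bigr).
\]
The two boundary sums are bounded by the induction hypothesis, giving roughly $2(ab)^{1/2}q^{3/16}\lambda_1$. Averaging over $a,b$, using $\chi(n+ab)=\chi(a)\chi(\bar a n+b)$ for $(a,q)=1$, and grouping $x\equiv \bar a n \pmod q$, we get
\[
\lvert S\rvert\le \frac{1}{AB}\sum_{x\bmod q}\nu(x)\Bigl\lvert\sum_{b\le B}\chi(x+b)\Bigr\rvert+\text{(induction error)},
\]
where $\nu(x)=\#\{(a,n): \bar a n\equiv x\}$.

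\textbf{Hölder with exponent 4.} Applying Hölder gives three factors:
\begin{itemize}
\item $\sum_x\nu(x)\le AN$;
\item the second moment $\sum_x\nu(x)^2$, which counts solutions of $a_1n_2\equiv a_2n_1$ and is at most about $2AN(AN/q+1)\log$-type terms, made explicit with attention to coprimality with $q$;
\item the fourth moment $\sum_x\bigl\lvert\sum_{b\le B}\chi(x+b)\bigr\rvert^4$.
\end{itemize}

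\textbf{Fourth moment for composite $q$ (the main obstacle).} Expand the fourth moment as a sum over $(b_1,\dots,b_4)$ of complete sums $\sum_x\chi\bigl((x+b_1)(x+b_2)\bigr)\bar\chi\bigl((x+b_3)(x+b_4)\bigr)$. Using the Chinese remainder theorem, the complete sum factors over prime powers $p^{r}\,\Vert\, q$. At each prime power we need an explicit bound for the mixed character sum of a rational function with four linear factors. Following Burgess and Jain-Sharma et al., the sum is at most a constant (roughly $8$ per prime, giving $8^{\omega(q)}$) times $p^{r/2}$ times a gcd of $p^r$ with a discriminant-type product of the differences $b_i-b_j$.

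The improvement over Theorem~\ref{prop:JainSharma} comes from counting 4-tuples with prescribed divisibility of these differences. The number of ways to distribute the $p$-adic valuations among the relevant factors is governed by $\binom{r+3}{3}$, which produces $T(q)$ in place of $d(q)^3$. The diagonal tuples (pairs equal) contribute about $3B^2q$. The result is an estimate of the form
\[
\sum_x\Bigl\lvert\sum_{b\le B}\chi(x+b)\Bigr\rvert^4\le 3B^2q+c\,8^{\omega(q)}T(q)B^4q^{1/2}.
\]
The delicate part is doing this local analysis with sharp constants at $p=2$ and at higher powers.

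\textbf{Optimisation.} Combining the three factors gives
\[
\lvert S\rvert\le (AB)^{-1}(AN)^{1/2}\bigl(\sum\nu^2\bigr)^{1/4}\bigl(3B^2q+c\,8^{\omega}TB^4q^{1/2}\bigr)^{1/4}+\text{error}.
\]
We choose $B$ to balance the two terms, $B\approx(q^{1/2}/(8^{\omega}T))^{1/2}$ times a constant, and choose $A$ with $AN\approx$ a multiple of $q$ so that $\sum\nu^2$ carries only a single $\log$, producing the factor $(2.578+\log q)^{1/4}$. This yields $N^{1/2}q^{3/16}\bigl(8^{\omega}T\bigr)^{1/4}(\log)^{1/4}$ times an explicit constant, plus the induction error of the shape $\kappa\lambda_1N^{1/2}q^{3/16}$ with $\kappa<1$.

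Closing the induction then means solving $C+\kappa\lambda_1\le\lambda_1$ for the leading constant, which gives $4.0733$. Integrality of $A,B$ and the restrictions $N\le q^{5/8}$ and $q\ge1.627\times10^{19}$ are used to control rounding losses and to ensure $A,B\ge1$. Finally, the coprimality restriction on $a$ is handled by restricting to primes $a$ not dividing $q$, or by absorbing it, which avoids the $q/\varphi(q)$ factor present in Theorem~\ref{prop:JainSharma}.
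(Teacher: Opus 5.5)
There is a genuine gap. Your outline is the generic Burgess $r=2$ scheme, but it omits or misdescribes exactly the steps that make this explicit statement true.

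\textbf{The averaging set for $a$.} Your H\"older step normalises by $AB$ and uses $\sum_x\nu(x)\le AN$. But $\chi(n+ab)=\chi(a)\chi(\bar a n+b)$ requires $(a,q)=1$, so the real normaliser is the number of admissible $a$. For integers coprime to $q$ this is $A_q$, and $A/A_q$ is precisely the $q/\varphi(q)$ and $2^{\omega(q)-1}$ loss that forced $q\ge\exp(\exp(9.594))$ in Theorem~\ref{prop:JainSharma}.

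Your last sentence names the paper's remedy (average over primes $a\le A$), but then everything downstream changes, and none of it is in your sketch. The second moment is not of the shape $2AN(AN/q+1)\log$. It is the prime-specific bound $\sum_x\nu(x)^2\le\pi(A)^2+2N\li\pi(A)+N\pi(A)+U(q,A)$ of Lemma~\ref{prop:secondMoment}, valid only for $AN<q/2$. The factor $(2.578+\log q)^{1/4}$ then comes from the explicit inputs $\frac{A+1}{\pi(A)}<0.45+\log A$, $\frac{(A+1)\li\pi(A)}{\pi(A)^2}<2.05$, $U(q,A)<1.68\pi(A)^2$ and $\log A\le\frac38\log q-\log 36$. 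Together these turn the bracket into $q^{1/4}(34.8+\frac{27}{2}\log q)$.

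If you exclude primes dividing $q$, you also need an explicit lower bound for how many remain relative to $\pi(A)$. Excluding them is what legitimises pulling out $\chi(a)$; the paper instead keeps all primes and tracks those dividing $q$ through $U(q,A)$. That count can vanish for small $A$: every prime up to $7$ divides $q$ when $210\mid q$. So you would have to use that your base range $N\le\lambda_1^2q^{3/8}$ forces $A$ to be large compared with $\omega(q)$, and verify it.

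\textbf{Parameters and constant.} Taking $B\approx(q^{1/2}/8^{\omega(q)}T(q))^{1/2}$ with $AN$ near a multiple of $q$ is incompatible with $A\asymp Nq^{-1/4}$ except when $N\asymp q^{5/8}$. Once $AN\ge q/2$, the congruence $n_1a_2-n_2a_1\equiv(a_1-a_2)M\pmod q$ has wrap-around solutions, and the second moment acquires an $(AN)^2/q$ term. The single logarithm actually comes from $N/\pi(A)<36q^{1/4}(0.45+\log A)$ with $AN\le q/36$, not from $AN\approx q$. With your $B$ and $A\asymp N/B$, the dependence on $8^{\omega(q)}T(q)$ also comes out as a different power, so your closing computation is not the one that yields $\lambda_1$.

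The paper fixes $B=\lfloor q^{1/4}\rfloor$ and $A=\lfloor Nq^{-1/4}/36\rfloor$. This gives $\lambda_2=(27/2)^{1/4}(4+7.0003/256)^{1/4}\approx2.7155$, and $4.0733\approx\frac32\lambda_2$ comes from iterating $\mathcal M(N)\le\lambda_2N^{1/2}q^{3/16}+2\mathcal M(N/36)$. Your sketch asserts $4.0733$ rather than deriving it.

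\textbf{Fourth moment.} No CRT local analysis is needed. The paper applies Burgess's global bound $(q,A_i)8^{\omega(q)}\sqrt q$ (Lemma~\ref{prop:Weil}, which needs three distinct $b_i$). It then counts tuples with $k\mid A_1=(b_1-b_2)(b_1-b_3)(b_1-b_4)$ by splitting $k$ into three factors. This gives $\sum_{k\mid q}d_3(k)=T(q)$ and the factor $4B(B-1)^3$; a gcd with a full discriminant product would not produce $T(q)$. Because three distinct $b_i$ are required, all $7B^2-6B$ tuples with at most two distinct values are bounded trivially, not $3B^2$.

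\textbf{Prime moduli.} The $7B^2q$ term contributes $(4+7/(8^{\omega(q)}T(q)))^{1/4}$. For prime $q$ this pushes the constant above $4.0733$, which is why the paper handles prime $q$ separately via Francis. A uniform argument like yours must check that case.
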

One application arises in \cite{Elkies2025}: Elkies uses Burgess-strength bounds for \emph{prime} moduli to verify that the rank of a particular elliptic curve is \emph{exactly} 16; prior to this, it was only known that its rank was \emph{at least} 16. Elkies then gives a curve whose rank is at least 17, but observes that proving its rank equals 17 (which is known under GRH) would require a Burgess-strength result for \emph{composite} moduli. Theorem \ref{prop:JainSharma} is insufficient because of the very high lower bound on $q$, but Theorem \ref{prop:mainTheorem} would suffice.

We note a recent preprint by Hasanalizade et al. \cite{HLMMT2026}, who improved Theorem \ref{prop:JainSharma} by extending the result to $r\geq 3$ for cubefree $q$, removing the restriction on $N$, decreasing the constant $9.07$ slightly, and permitting $q\geq 10^{1143}$ for $r$ not too large.
\section{Our strategy}

The Burgess estimate is proved by considering shifts of $S(M,N)$ by some integer $ab$; typically, $b$ is allowed to take any integer value in a particular range (e.g.\ $[1,B]$ and $[-B,B]$, for some $B\geq 1$). The set over which $a$ can vary is usually restricted in some way, such as the primes or integers coprime to $q$ in $[1,A]$ or $[-A,A]$. Such a choice is made in \cite{JSKL2021}; they allow $a$ to vary over $$A^q:=\{n\in \Z : 1\leq n \leq A, \gcd(n,q)=1\}.$$ This set's cardinality, which they denote $A_q$, is roughly $A\cdot \frac{\varphi(q)}{q}$, but the error may be as large as $2^{\omega{(q)}-1}$. This bound is sharp in a certain sense (see \cite{Vijayaraghavan1951}). Later, they prove and use an upper bound $\frac{A}{A_q}\leq 2\frac{q}{\varphi(q)}$, but the slow decay of $2^{\omega(q)-1}$ compared to $q$ means that this bound is only valid when $q\geq \exp(\exp(9.594))$. The lower bound on $q$ in Theorem \ref{prop:JainSharma} originates from this restriction, but it need not be so high; in Section \ref{sec:cheap}, we show that Theorem \ref{prop:JainSharma} still holds for much smaller values of $q$.

In Section \ref{sec:proper}, we instead allow $a$ to vary over the primes in $[1,A]$. The size of this set, $\pi(A)$, is well-studied and has comparatively better error terms available, so we will be able to achieve a significant improvement to the lower bound of $q$.

\section{A cheap improvement upon Jain-Sharma et al.} \label{sec:cheap}
We note that Theorem \ref{prop:JainSharma} actually holds for $q\geq 1.95\times 10^{446}$, and this can be proved without much trouble. The bottleneck stems from \cite[Proposition 2.3]{JSKL2021}:
\begin{prop}\label{prop:JainSharma2.3}
    Suppose that $q$ is a positive integer with $q\geq \exp(\exp(9.594))$ and that $N$ is a positive integer in the range $[q^{3/8},q^{5/8}]$. Let $A=\floor{\frac{1}{10}Nq^{-1/4}}$. Then $\frac{A_q}{A}\geq \frac{1}{2}\cdot \frac{\varphi(q)}{q}$.
\end{prop}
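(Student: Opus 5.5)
We count $A_q$ by inclusion--exclusion over the squarefree divisors of $q$. Writing $\mu$ for the Möbius function,
\begin{align*}
A_q=\sum_{d\mid q}\mu(d)\floor*{\frac{A}{d}}=A\frac{\varphi(q)}{q}-\sum_{d\mid q}\mu(d)\left\{\frac{A}{d}\right\}.
\end{align*}
Only squarefree $d$ contribute to the error sum. Half of these have $\mu(d)=+1$, and each fractional part lies in $[0,1)$, so the error has absolute value less than $2^{\omega(q)-1}$. Hence
$$\frac{A_q}{A}\geq \frac{\varphi(q)}{q}-\frac{2^{\omega(q)-1}}{A},$$
and it suffices to show that $2^{\omega(q)-1}\leq \frac{1}{2}A\frac{\varphi(q)}{q}$, that is,
\begin{align*}
2^{\omega(q)}\frac{q}{\varphi(q)}\leq A .
\end{align*}

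Next we bound $A$ from below. Since $N\geq q^{3/8}$, we have $A\geq \frac{1}{10}q^{1/8}-1$, so it is enough to prove
$$2^{\omega(q)}\frac{q}{\varphi(q)}\leq \frac{1}{10}q^{1/8}-1 \qquad\text{for all } q\geq \exp(\exp(9.594)).$$
For $q/\varphi(q)$ we use the explicit Rosser--Schoenfeld bound
$$\frac{q}{\varphi(q)}< e^{\gamma}\log\log q+\frac{2.51}{\log\log q}\qquad (q\geq 3).$$
For $2^{\omega(q)}$ we use Robin's explicit bound
$$\omega(q)\leq 1.3841\frac{\log q}{\log\log q}\qquad (q\geq 3).$$
After taking logarithms, the inequality becomes
$$1.3841\log 2\,\frac{\log q}{\log\log q}+\log\!\Bigl(e^{\gamma}\log\log q+\tfrac{2.51}{\log\log q}\Bigr)\leq \frac{\log q}{8}-\log 10+o(1).$$

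The main obstacle is that this is not a routine verification: it amounts to requiring $\log\log q\geq 8\cdot 1.3841\log 2\approx 7.67$, with some margin for the lower-order terms. Presumably that margin is the origin of the threshold $\exp(\exp(9.594))$. I would set $x=\log\log q$ and prove monotonicity in $x$ by differentiating. That is, I would show that
$$\frac{e^x}{8}-1.3841\log 2\,\frac{e^x}{x}-\log\!\Bigl(e^{\gamma}x+\tfrac{2.51}{x}\Bigr)-\log 10$$
is increasing for $x\geq 9.594$. I would then check by direct numerical evaluation that it is positive at $x=9.594$. The two terms of size $e^x$ dominate, since their coefficient $\frac{1}{8}-\frac{0.9594}{x}$ is positive there. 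This yields the proposition with room to spare, which is consistent with the paper's remark that the bound can later be pushed down.
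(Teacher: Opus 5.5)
Your proof is correct and follows the paper's argument: the same inclusion--exclusion bound $A_q > A\varphi(q)/q - 2^{\omega(q)-1}$, the same reduction to $A \ge 2^{\omega(q)}q/\varphi(q)$ via $A > \frac{1}{10}q^{1/8}-1$, and the same Rosser--Schoenfeld bound for $q/\varphi(q)$. The dropped $o(1)$ term is $\log(1-10q^{-1/8})<0$, which is harmless given your margin of about $360$ at $x=9.594$. The only real difference is the bound on $\omega(q)$. The paper uses Robin's sharper $\omega(n)\le \log n/(\log\log n-1.1714)$ for $n\ge 26$, whose exponent $\log 2/(x-1.1714)$ falls below $1/8$ once $x>6.72$; this gives the proposition already for $q\ge 1.95\times 10^{446}$. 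Your $1.3841\log n/\log\log n$ beats $1/8$ only for $x>7.67$, which is still enough for the stated threshold $x=9.594$, where indeed $1.3841\log 2/x=1/10$.
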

To improve this, we use results of Robin and of Rosser and Schoenfeld. Both can be improved by assuming a larger lower bound on $n$, but our goal here is merely to show that Theorem \ref{prop:JainSharma} holds for much smaller $q$ than claimed in \cite{JSKL2021}.
\begin{lemma}\cite[Theorem 13]{Robin1983}\label{prop:Robin}
Let $n$ be a positive integer such that $n\geq 26$. Then $$\omega(n)\leq \frac{\log{n}}{-1.1714+\log{\log{n}}}.$$
\end{lemma}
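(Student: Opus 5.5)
This is Robin's result \cite[Theorem 13]{Robin1983}, which the paper uses as a black box; if one wanted to reprove it, the natural route is to reduce to primorials. Write $c=1.1714$ and $f(x)=\log x/(\log\log x-c)$. If $\omega(n)=k$, then $n\geq N_k:=p_1p_2\cdots p_k$, the product of the first $k$ primes. Differentiating gives
$$f'(x)=\frac{\log\log x-c-1}{x(\log\log x-c)^2},$$
so $f$ is increasing once $\log\log x\geq c+1$, i.e.\ for $x\geq x_0:=\exp(\exp(2.1714))\approx 6.5\times 10^{3}$. For $n\geq \max(N_k,x_0)$ we then have $f(n)\geq f(\max(N_k,x_0))$. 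The whole problem therefore reduces to two tasks:
\begin{itemize}
\item verifying $k\leq f(N_k)$ for every $k$ with $N_k\geq x_0$;
\item handling the finitely many $n$ with $26\leq n<x_0$.
\end{itemize}

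The small range is a direct check. For $n<x_0$ we have $\omega(n)\leq 5$, since $N_6=30030>x_0$. It suffices to compare, for each $k\leq 5$, the smallest $n\geq 26$ with $\omega(n)=k$ against the behaviour of $f$ on $[26,x_0]$. Note that $f$ is decreasing there but large, because $\log\log 26\approx 1.18$ is barely above $c$, so the denominator is tiny. This is a short computer verification.

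For the primorials, set $\theta(p_k)=\log N_k$. The inequality $k\leq f(N_k)$ is then equivalent to
$$\log\theta(p_k)-\frac{\theta(p_k)}{k}\leq c .$$
For $k$ up to some explicit bound $K$ (a few thousand, say), I would verify this numerically by computing $\theta(p_k)$ directly. The extremal case should sit at a moderate $k$ (I expect around $k=9$), which is where the constant $1.1714$ comes from, so this part is sharp but finite.

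For $k>K$, I would insert explicit Rosser--Schoenfeld-type bounds for $p_k$ and $\theta(x)$. Specifically,
$$p_k\geq k(\log k+\log\log k-1)\quad\text{and}\quad \theta(x)\geq x\Bigl(1-\frac{1}{\log x}\Bigr),$$
together with $\theta(x)<x$ and $p_k\leq k(\log k+\log\log k)$ to bound $\log\theta(p_k)$ from above. These give
$$\frac{\theta(p_k)}{k}\geq \log k+\log\log k-1-O\Bigl(\frac{\log\log k}{\log k}\Bigr)
\quad\text{and}\quad
\log\theta(p_k)\leq \log k+\log\log k+O\Bigl(\frac{\log\log k}{\log k}\Bigr).$$
Hence the left side of the target inequality is at most $1+O(\log\log k/\log k)$, which is below $c$ once $k$ is large, with every implied constant explicit.

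I expect the main obstacle to be matching the two ranges. The asymptotic margin is only $c-1=0.1714$, and the error term decays like $\log\log k/\log k$, so the explicit bounds only win for quite large $k$. The point of difficulty is choosing the sharpest available explicit forms of the $p_k$ and $\theta$ bounds, so that the threshold $K$ stays within reach of direct computation.
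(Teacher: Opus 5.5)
\textbf{There is a genuine gap in your treatment of large $k$.} The paper gives no proof of this lemma; it simply cites Robin. Your reduction to primorials is the right frame. In fact $\min_{x>e^{e^c}}f(x)=f(x_0)=e^{c+1}\approx 8.77$, so every $n\ge 26$ with $\omega(n)\le 8$ is automatic. For $k\ge 9$ (where $N_k>x_0$) the claim is exactly $g(k):=\log\theta(p_k)-\theta(p_k)/k\le c$.

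The problem is the bound $\theta(x)\ge x(1-1/\log x)$. It loses $p_k/\log p_k\sim k$ in $\theta(p_k)$, i.e.\ a full unit in $\theta(p_k)/k$. Combined with $p_k\ge k(\log k+\log\log k-1)$ it gives only $\theta(p_k)/k\ge \log k+\log\log k-2+o(1)$. It does not give $\log k+\log\log k-1-O(\log\log k/\log k)$, as you assert. Your resulting upper bound for $g(k)$ is therefore $2+o(1)$, which never drops below $c=1.1714$, so no finite $K$ rescues the argument.

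\textbf{What a working tail needs.} In truth $g(k)=1+(1+o(1))/\log k$, and the available room is only $c-1=0.1714$. So you need an explicit estimate $\theta(x)=x+O(x/\log^2 x)$, for instance Dusart-type bounds $|\theta(x)-x|<\eta x/\log^2 x$ for $x\ge x_1$; this makes the loss in $\theta(p_k)/k$ only $O(1/\log k)$. You also need sharp two-sided bounds for $p_k$.
\begin{itemize}
\item With just $k(\log k+\log\log k-1)\le p_k\le k(\log k+\log\log k)$ you get $g(k)\le 1+\log\log k/\log k+(\text{losses})$. This beats $c$ only for $k$ of order $10^6$--$10^7$.
\item Dusart's $p_k\le k(\log k+\log\log k-0.9484)$ (for $k\ge 39017$) and $p_k\ge k\bigl(\log k+\log\log k-1+(\log\log k-2.1)/\log k\bigr)$ bring the main term down to about $1+1.15/\log k$.
\end{itemize}

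\textbf{Smaller corrections.}
\begin{itemize}
\item $\theta(x)<x$ is false for some large $x$ (Littlewood). Use e.g.\ $\theta(x)<x(1+1/(2\log x))$ instead; this costs only $O(1/\log k)$.
\item The extremal case is not near $k=9$, where $g(9)\approx 0.82$. It lies near $k=189$ (with $p_{189}=1129$), where $g(189)\approx 1.1713$. The margin there is about $10^{-4}$, so your finite verification must be carried out with rigorous precision.
\item Your two-task list omits $n\ge x_0$ with $\omega(n)\le 5$. This case is covered by $f(n)\ge e^{c+1}$.
\end{itemize}
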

\begin{lemma}\cite[(3.42)]{RS1962}\label{prop:RS-NOverPhiN}
Let $n$ be a positive integer such that $n\geq 3$. Then $$\frac{n}{\varphi(n)} \leq e^{\gamma}\log{\log{n}}+\frac{2.50637}{\log{\log{n}}},$$ where $\gamma$ is Euler's constant.    
\end{lemma}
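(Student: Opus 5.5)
The plan is the classical primorial reduction of Rosser and Schoenfeld. Write
$$\frac{n}{\varphi(n)}=\prod_{p\mid n}\Bigl(1-\frac1p\Bigr)^{-1}.$$
Let $k=\omega(n)$ and let $N_k=p_1p_2\cdots p_k$ be the product of the first $k$ primes. The factors $(1-1/p)^{-1}$ decrease in $p$, and the $i$-th smallest prime divisor of $n$ is at least $p_i$. Hence $n/\varphi(n)\le N_k/\varphi(N_k)$, and $n\ge N_k$ since $n$ has $k$ distinct prime factors.

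Put $f(L)=e^{\gamma}L+2.50637/L$. We have $f'(L)=e^\gamma-2.50637/L^2$, which is positive once $L\ge 1.19$, i.e.\ once $n\ge e^{e^{1.19}}\approx 27$. For such $n$ we get $f(\log\log n)\ge f(\log\log N_k)$, so it suffices to prove the inequality for $n=N_k$, $k\ge 1$. The finitely many $3\le n\le 27$ are checked directly.

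For primorials, write $x=p_k$, so that $\log N_k=\theta(x)$ and $N_k/\varphi(N_k)=\prod_{p\le x}(1-1/p)^{-1}$. I would split according to the size of $x$.

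For large $x$, say $x\ge x_0$, I would use the explicit Mertens product bound
$$\prod_{p\le x}\Bigl(1-\frac1p\Bigr)^{-1}<e^\gamma\log x\Bigl(1+\frac{1}{2\log^2x}\Bigr)$$
together with a lower bound for $\theta$ such as $\theta(x)\ge x(1-1/\log x)$. This gives
$$\log x\le \log\theta(x)-\log\Bigl(1-\frac{1}{\log x}\Bigr)=\log\log N_k+O\Bigl(\frac{1}{\log x}\Bigr).$$
Substituting, the left side is $e^\gamma\log\log N_k+O(1/\log x)$. Since $\log\log N_k\approx\log x$, this error is comfortably below $2.50637/\log\log N_k$ once $x$ is moderately large, because the implied constant is about $e^\gamma(1+\tfrac12)\approx 2.67$ only in the worst direction and improves with $x$.

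For the remaining primorials with $p_k<x_0$, I would compute both sides directly. The constant $2.50637$ is exactly the one forced by the extremal case $N_9=2\cdot3\cdots23=223092870$, where equality essentially holds. So the main obstacle is choosing $x_0$ so that the analytic error terms are dominated from $x_0$ on, while keeping the finite verification below $x_0$ manageable. The tightness at $k=9$ means the computation there must be done precisely rather than via the asymptotic bounds.
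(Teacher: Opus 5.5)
The paper gives no proof of this lemma; it simply quotes Rosser--Schoenfeld. Your route (reduce to primorials, then use explicit Mertens and $\theta$ bounds, then a finite check) is the natural one. However, the large-$x$ step does not close with the inputs you name. Write $\ell=\log x$ and $L=\log\theta(x)=\log\log N_k$. Your Mertens bound together with $\theta(x)\ge x(1-1/\ell)$ gives
\begin{equation*}
\frac{N_k}{\varphi(N_k)}-e^{\gamma}L< e^{\gamma}\Bigl(-\log\Bigl(1-\frac{1}{\ell}\Bigr)+\frac{1}{2\ell}\Bigr)=\frac{e^{\gamma}}{\ell}\Bigl(\frac32+\frac{1}{2\ell}+\cdots\Bigr),
\end{equation*}
while the allowance is $2.50637/L$ with $L=\ell+o(1)$.

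Both error terms are of exact order $1/\log x$ with fixed constants. So $\frac32e^{\gamma}\approx 2.67$ is the \emph{limit} of what you can prove, not a worst case that improves with $x$. Since $2.67>2.50637$, your sufficient inequality fails for all large $x$, and no choice of $x_0$ rescues it. For instance, at $x=10^{10}$ you get $0.1178$ against an allowance of $0.1089$.

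What is missing is a sharper lower bound $\theta(x)\ge x(1-c/\log x)$ with $e^{\gamma}(c+\frac12)<2.50637$, i.e.\ $c<0.907$. Rosser and Schoenfeld's bound $\theta(x)>x(1-\frac{1}{2\log x})$ for $x\ge 563$ suffices. Combine it with $\theta(x)<1.01624x$, so that $L<\ell+0.0162$. Then it is enough that
\begin{equation*}
e^{\gamma}\Bigl(-\log\Bigl(1-\frac{1}{2\ell}\Bigr)+\frac{1}{2\ell}\Bigr)(\ell+0.0162)\le 2.50637,
\end{equation*}
and the left side decreases from about $1.82$ at $\ell=\log 563$ towards $e^{\gamma}$. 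This handles all $p_k\ge 563$. It leaves $k\le 102$ for direct computation, including the nearly tight case $k=9$, where the required constant is $2.5063693\ldots$.

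There is also a smaller slip in the reduction to primorials. For $\omega(n)\le 2$ the comparison with $N_k$ is invalid. First, $N_1=2$ lies outside the range, and the inequality is false there since $\log\log 2<0$. Second, for $\omega(n)=2$ and $n\ge 27$ you cannot deduce $f(\log\log n)\ge f(\log\log 6)$, because $\log\log 6\approx 0.58$ lies where $f$ is decreasing. For example, $f(\log\log 28)\approx 4.23<5.34\approx f(\log\log 6)$.

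This is easily repaired. For every $L>0$ we have $f(L)\ge 2\sqrt{2.50637\,e^{\gamma}}\approx 4.226$, while $n/\varphi(n)\le 15/4$ whenever $\omega(n)\le 3$. So those $n$ are immediate. For $k\ge 4$ one has $N_k\ge 210$, and your monotonicity argument applies as written. The separate check of $3\le n\le 27$ then becomes unnecessary.
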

We now prove that Proposition \ref{prop:JainSharma2.3} still holds if we relax the lower bound on $q$ to $1.95\times 10^{446}$. As mentioned above, we have $$\frac{A_q}{A}>\frac{\varphi(q)}{q}-\frac{2^{\omega(q)-1}}{A},$$ so we will be done if $$\frac{\varphi(q)}{q}-\frac{2^{\omega(q)-1}}{A}\geq \frac{1}{2}\frac{\varphi(q)}{q}\impliedby A\geq \frac{2^{\omega(q)}q}{\varphi(q)}\impliedby q^{1/8}\geq 10\left(\frac{2^{\omega(q)}q}{\varphi(q)}+1\right).$$
Write $q=\exp(\exp(K))$ and use Propositions \ref{prop:Robin} and \ref{prop:RS-NOverPhiN}; it is enough to have \begin{align}\label{eq:2.3WinCondition}e^{\frac{1}{8}e^K}\geq 10\left(e^{e^K\cdot \frac{0.6932}{K-1.1714}}\left(e^{\gamma}K+\frac{2.50637}{K}\right)+1\right).\end{align} Since $\exp(\exp(6.935))<1.95\times 10^{446}$, we will be done if we can show that \eqref{eq:2.3WinCondition} holds when $K\geq6.935$. We assume this and find that the right-hand side of \eqref{eq:2.3WinCondition} does not exceed \begin{align*}10\left(e^{0.1202721e^K}(1.78109K+0.36141)+1\right)&\leq 18.33204Ke^{0.1202721e^K}+10\\
&=e^{e^K\left[0.1202721+\frac{\log{18.33204}}{e^K}+\frac{\log{K}}{e^K}\right]}+10\\
&\leq e^{0.124988e^K}+10,
\end{align*} which is easily proven to be less than or equal to $e^{0.125e^K}$, as needed. It is not difficult to see that the proof of \cite[Theorem 1.1]{JSKL2021} still works out if we assume the relaxed lower bound and substitute our improved version of Proposition \ref{prop:JainSharma2.3}.
\section{A more considered improvement} \label{sec:proper}
In this section, we prove Theorem \ref{prop:mainTheorem}. First, we present a result due to Burgess.
\begin{lemma} \cite[Lemma 7]{Burgess1962} \label{prop:Weil}
    Let $\chi$ be a primitive character with modulus $q$, and let $$f(x)=(x-b_1)(x-b_2)(x-b_3)^{\varphi(q)-1}(x-b_4)^{\varphi(q)-1},$$ with at least three of the $b_i$ being distinct. For any $i\in\{1,2,3,4\}$, let $$A_i=A_i(b_1,b_2,b_3,b_4):=\prod_{j\neq i}(b_i-b_j).$$ Then for some nonzero $A_i$, we have $$\left\lvert\sum_{x=1}^{q}\chi(f(x))\right\rvert\leq (q,A_i)\cdot 8^{\omega(q)}\sqrt{q}.$$
\end{lemma}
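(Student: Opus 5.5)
This lemma is quoted from Burgess \cite[Lemma 7]{Burgess1962}, so I would reconstruct his argument. The plan is to reduce to prime-power moduli by the Chinese remainder theorem and then treat primes and higher prime powers separately. Write $q=\prod p_k^{r_k}$. Since $\chi$ is primitive, $\chi=\prod_k\chi_k$ with each $\chi_k$ primitive modulo $p_k^{r_k}$. The complete sum then factors as
\[
\sum_{x=1}^{q}\chi(f(x))=\prod_k\sum_{x=1}^{p_k^{r_k}}\chi_k(f(x)).
\]
It therefore suffices to show, for each $k$, that
\[
\Bigl|\sum_{x \bmod p^r}\chi_k(f(x))\Bigr|\le 8\,(p^r,A_i)\,p^{r/2}
\]
for a suitable fixed index $i$ with $A_i\neq 0$. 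The constant $8^{\omega(q)}$ comes from one factor $8$ per prime. For the product of the gcds to equal $(q,A_i)$, a single $i$ must work for all primes simultaneously. I would choose $i$ so that $b_i$ is distinct from the other $b_j$ where possible. Since at least three $b_j$ are distinct, such an $i$ with $A_i\neq0$ exists, possibly after using the symmetry $b_1\leftrightarrow b_2$, $b_3\leftrightarrow b_4$.

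Next I would rewrite the character values. Because $\chi^{\varphi(q)}$ is principal on units, on $x$ with $(x-b_3)(x-b_4)$ coprime to $q$ we have
\[
\chi(f(x))=\chi\bigl((x-b_1)(x-b_2)\bigr)\,\bar\chi\bigl((x-b_3)(x-b_4)\bigr).
\]
This is a product of characters of linear factors.

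For $r=1$ this is a Weil-type sum $\sum_x\chi(g(x))$ with $g$ a rational function that is not a $d$-th power, where $d$ is the order of $\chi_k$. That condition holds because three roots are distinct modulo $p$ when $p\nmid A_i$. Weil's bound then gives at most $(\#\text{distinct roots}-1)\sqrt p\le 3\sqrt p$. If $p\mid A_i$, I would use the trivial bound $p\le (p,A_i)\sqrt p$.

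For $r\ge2$ I would use Postnikov/Burgess stationary phase. Write $x=y+p^{\lceil r/2\rceil}z$ and expand $\chi_k(1+p^{s}t)=e(\text{linear in }t/p^{r-s})$, using primitivity. The sum over $z$ then forces $y$ to satisfy a congruence $g'(y)/g(y)\equiv 0$ modulo $p^{\lfloor r/2\rfloor}$. Here
\[
\frac{g'(y)}{g(y)}=\sum\pm\frac{1}{y-b_j},
\]
which after clearing denominators is a cubic-type polynomial whose discriminant-like quantity involves $A_i$. Counting its solutions modulo $p^{\lfloor r/2\rfloor}$ by Hensel-type arguments yields at most $c\,(p^{r},A_i)$ roots, giving a bound of $8(p^r,A_i)p^{r/2}$.

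I expect this step to be the main obstacle: bounding the number of solutions of the derivative congruence uniformly by $(p^r,A_i)$ times a small constant, including $p=2,3$ and the degenerate cases where two $b_j$ coincide modulo $p$. Keeping the constant at $8$ and using the same $i$ for every prime also requires care.
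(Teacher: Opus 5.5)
The paper gives no proof of this lemma; it is quoted from Burgess. Your reconstruction therefore has to stand on its own, and it has a genuine gap.

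\textbf{What works.} The CRT factorisation is correct, and so is the case $r=1$. In fact, for $r=1$ your bound holds for \emph{every} $i$ with $A_i\neq0$: Weil gives $3\sqrt p$ when $p\nmid A_i$, and otherwise the trivial bound gives $p\le (p,A_i)\sqrt p$.

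\textbf{The odd-exponent step fails.} The gap is the prime-power case $r\ge2$, which is the whole substance of the lemma and which you explicitly leave open. Beyond being unfinished, the route you describe cannot reach $p^{r/2}$ when $r$ is odd. With $x=y+p^{\lceil r/2\rceil}z$, the $z$-sum contributes only $p^{\lfloor r/2\rfloor}$. Meanwhile $y$ runs modulo $p^{\lceil r/2\rceil}$, but the condition $h(y)=g'(y)/g(y)\equiv0$ is only modulo $p^{\lfloor r/2\rfloor}$. For $r=2m+1$, each admissible root modulo $p^m$ therefore accounts for $p$ values of $y$, and the triangle inequality gives
\begin{equation*}
\Bigl|\sum_{x\bmod p^r}\chi_k(f(x))\Bigr|\le p^m\cdot p\,N_m=\sqrt p\,N_m\,p^{r/2},
\end{equation*}
where $N_m$ is the number of admissible roots modulo $p^m$. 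In the generic case the $b_j$ are distinct mod $p$, so $(p^r,A_i)=1$, and there are two simple admissible critical points, so $N_m=2$. You then get $2p^{(r+1)/2}$, which exceeds $8p^{r/2}$ once $p>16$. No better root count can repair this, because the loss comes from discarding the oscillation over those $p$ lifts.

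The missing ingredient is a second-order step. For $r=2m+1$, put $x=y+p^mz$ with $z\bmod p^{m+1}$ and expand $\chi(g(y+p^mz)/g(y))$ through the $z^2$ term. After imposing $p^m\mid h(y)$, the remaining sum over $z\bmod p$ is a quadratic Gauss sum, of modulus $\sqrt p$ whenever $p\nmid h'(y)$. The degenerate points with $p\mid h'(y)$ then need separate treatment, as do $p=2,3$, where the expansion has denominators.

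\textbf{The root count and the choice of $i$ are unproved.} Even for even $r$, your key assertion is stated rather than proved: at most $c\,(p^r,A_i)$ admissible roots, with one $i$ serving every prime. Your proposed choice of $i$ (``$b_i$ distinct from the other $b_j$'') is vacuous, since over $\mathbb{Z}$ that just says $A_i\neq0$. What you need is a local bound for an index that does not depend on $p$. The natural way is to prove it for every $i$ with $A_i\neq0$, as you already have for $r=1$; then the gcds multiply to $(q,A_i)$.

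Some facts that make the counting tractable:
\begin{itemize}
\item Clearing denominators gives $H(y)=\sum_j\epsilon_j\prod_{k\neq j}(y-b_k)$ with $\epsilon=(1,1,-1,-1)$. This has degree at most $2$, since the $y^3$ terms cancel, so it is not a cubic.
\item It satisfies $H(b_i)=\epsilon_iA_i$, which is how $A_i$ enters.
\item For odd $p$, an admissible root of $H$ modulo $p$ can be multiple only if $\{b_1,b_2\}\equiv\{b_3,b_4\}\pmod p$ as multisets, and in that case $H\equiv0\pmod p$.
\end{itemize}
Hence if some $A_j$ is prime to $p$, there are at most two admissible roots, each lifting uniquely. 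The real work lies in the $p$-adically clustered configurations where every $A_j\equiv0\pmod p$ (including all four $b_j$ congruent), and in $p=2$.

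Until those counts and the odd-$r$ step are supplied, your proposal has the right architecture but is not yet a proof.
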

The next result is elementary; it will be used in the proof of the following lemma.
\begin{lemma}\label{prop:counting}
    Let $S$ be a set of $B$ consecutive integers with $0\in S$. For any integer $n$, there are at most $\frac{B-1}{n}$ nonzero multiples of $n$ in $S$.
\end{lemma}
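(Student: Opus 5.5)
The plan is to reduce to counting multiples of $n$ inside an interval, splitting $S$ at $0$. Since the elements of $S$ are consecutive and $0\in S$, we can write $S=\{-u,-u+1,\dots,v\}$ with integers $u,v\ge 0$ and $u+v+1=B$. The nonzero elements of $S$ then fall into two groups: the positive integers $1,\dots,v$ and the negative integers $-1,\dots,-u$.

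First dispose of the case $n=0$: zero has no nonzero multiples, so the count is $0$, which is at most $\frac{B-1}{n}$ under any sensible reading. Because $m$ is a multiple of $n$ exactly when it is a multiple of $-n$, we may then assume $n\ge 1$.

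For $n\ge 1$, the positive multiples of $n$ in $[1,v]$ number exactly $\floor{v/n}\le v/n$. Likewise, the negative multiples in $[-u,-1]$ number exactly $\floor{u/n}\le u/n$. Adding the two counts gives at most
\[
\frac{u+v}{n}=\frac{B-1}{n}
\]
nonzero multiples of $n$ in $S$, as required.

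There is no real obstacle here. The only point needing care is to exclude $0$ itself, which is why $B-1$ appears rather than $B$. The other subtlety is to count the two sides of $0$ separately, since the floors are then taken individually and no rounding is lost when they are combined.
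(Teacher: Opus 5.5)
Your proof is correct for $n\ge 1$ and is the same as the paper's: you split $S$ at $0$ into $u$ negative and $v$ positive elements with $u+v=B-1$, then bound $\lfloor u/n\rfloor+\lfloor v/n\rfloor\le (B-1)/n$. One correction: the reduction from negative $n$ to $n\ge 1$ is not valid, because the count is unchanged under $n\mapsto -n$ but the bound $(B-1)/n$ changes sign (for $S=\{0,1\}$ and $n=-1$ there is one nonzero multiple against a bound of $-1$); so the lemma should be read for positive $n$, which is all the paper uses (with $n=k_2,k_3,k_4$), or with $(B-1)/\lvert n\rvert$ for general nonzero $n$.
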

\begin{proof}
    Write $N$ and $P$, respectively, for the number of negative and positive integers in $S$, so that $N+P+1=B$. There are $\floor{P/n}$ multiples of $n$ among the positive integers and $\floor{N/n}$ multiples of $n$ among the negative integers. The total is $\floor{P/n}+\floor{N/n}\leq \frac{P+N}{n}=\frac{B-1}{n}.$
\end{proof}
Next, we improve \cite[Lemma 3.2]{JSKL2021}. We will use the notation $d_3(n)$ to denote the number of ways to write $n$ as the product of three integers, where order matters.
\begin{lemma} \label{prop:WeilCounting}
    Let $\chi$ be any primitive character modulo $q$ and let $B$ be a positive integer. If $p_1^{r_1}\ldots p_s^{r_s}$ is the prime factorisation of $q$, then let $T(q)=\binom{r_1+3}{3}\ldots \binom{r_s+3}{3}$. Then
    $$\sum_{x=1}^{q}\left\lvert \sum_{b=1}^{B}\chi(x+b)\right\rvert^4\leq (7B^2-6B)q+4\cdot  8^{\omega(q)}B(B-1)^3 T(q)\sqrt{q}.$$
\end{lemma}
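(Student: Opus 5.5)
Expand the fourth power as
$$\left\lvert \sum_{b=1}^{B}\chi(x+b)\right\rvert^4=\sum_{b_1,b_2,b_3,b_4}\chi(x+b_1)\chi(x+b_2)\overline{\chi}(x+b_3)\overline{\chi}(x+b_4).$$
Since $\overline{\chi}(y)=\chi(y)^{\varphi(q)-1}$ for $(y,q)=1$, and both sides vanish when $(y,q)>1$, each summand equals $\chi(f(x))$ with $f$ as in Lemma \ref{prop:Weil} taken at the points $-b_i$. Summing over $x$ and interchanging, I split the quadruples $(b_1,b_2,b_3,b_4)\in[1,B]^4$ into degenerate ones (at most two distinct values) and non-degenerate ones (at least three distinct values).

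\emph{Degenerate quadruples.} Trivially $\lvert\sum_x\chi(f(x))\rvert\le q$. I count quadruples with at most two distinct values: there are $B$ with all entries equal, and for each of the $\binom{B}{2}$ unordered pairs of distinct values there are $2^4-2=14$ nonconstant assignments. This gives $B+7B(B-1)=7B^2-6B$, which is exactly the first term.

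\emph{Non-degenerate quadruples.} Lemma \ref{prop:Weil} gives a bound $(q,A_i)8^{\omega(q)}\sqrt q$ for some $i$ with $A_i\neq 0$. I would bound $(q,A_i)$ by the sum over all nonzero $A_i$, or more cleverly organise by $i$. Fix $i$ and the value $b_i$ (at most $B$ choices; this accounts for the factor $4B$, with $4$ from the choice of $i$). Set $c_j=b_j-b_i$ for $j\ne i$; each $c_j$ ranges over a set of $B$ consecutive integers containing $0$. Then $A_i=\pm c_1c_2c_3$ over the three $j\neq i$, and since $A_i\ne0$ all $c_j$ are nonzero. Write $(q,c_1c_2c_3)\le \sum_{d_1d_2d_3\mid q,\ d_j\mid c_j}d_1d_2d_3$; it suffices to use any bound of the form $(q,c_1c_2c_3)\le\sum_{d_1d_2d_3\mid q,\,d_j\mid c_j}(\text{weight})$. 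The cleanest choice is to use multiplicativity: $(q,c_1c_2c_3)\le \sum_{(d_1,d_2,d_3)} d_1d_2d_3$ over ordered triples with $d_1d_2d_3\mid q$ and $d_j\mid c_j$, since the triple $d_j=(c_j,\cdot)$ chosen greedily prime by prime attains $(q,c_1c_2c_3)$. Swapping sums, for fixed $(d_1,d_2,d_3)$ Lemma \ref{prop:counting} bounds the number of nonzero $c_j$ divisible by $d_j$ by $(B-1)/d_j$. The $d_1d_2d_3$ factor then cancels, leaving $(B-1)^3$ times the number of ordered triples with $d_1d_2d_3\mid q$. That number is $\sum_{n\mid q}d_3(n)=\prod_k\sum_{e\le r_k}\binom{e+2}{2}=\prod_k\binom{r_k+3}{3}=T(q)$. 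Combining gives $4\cdot 8^{\omega(q)}B(B-1)^3T(q)\sqrt q$.

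\emph{Main obstacle.} The delicate step is the inequality $(q,c_1c_2c_3)\le\sum_{d_1d_2d_3\mid q,\,d_j\mid c_j} d_1d_2d_3$. I would verify it by exhibiting a single triple: prime by prime, assign the exponent of $p$ in $(q,c_1c_2c_3)$ among $c_1,c_2,c_3$ without exceeding their valuations. That triple alone contributes $(q,c_1c_2c_3)$, and all other terms are nonnegative. A second point of care is that the index $i$ supplied by Lemma \ref{prop:Weil} depends on the quadruple. This is handled by summing over all four $i$ with $A_i\neq0$ as an upper bound, which is exactly where the factor $4$ comes from.
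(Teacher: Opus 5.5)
Your proposal is correct and follows the paper's proof essentially step for step. Both arguments use the triangle inequality, bound the $7B^2-6B$ degenerate quadruples trivially, apply Lemma \ref{prop:Weil} summed over all $i$ with $A_i\neq 0$ (which is where the factor $4$ comes from), and use Lemma \ref{prop:counting} on each difference $b_j-b_i$ to reduce to counting ordered triples with $d_1d_2d_3\mid q$, a count equal to $\sum_{k\mid q}d_3(k)=T(q)$.

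The only difference is cosmetic. You bound $(q,c_1c_2c_3)$ directly by $\sum_{d_1d_2d_3\mid q,\ d_j\mid c_j}d_1d_2d_3$ using an explicit prime-by-prime splitting. The paper instead writes $(q,A_1)\le\sum_{k\mid (q,A_1)}k$ and then uses the same splitting implicitly, as a union bound over factorisations $k=k_2k_3k_4$. Both routes give the identical majorant, and your version makes that implicit step explicit.
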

\begin{rem}
    In \cite[Lemma 3.2]{JSKL2021}, the $T(q)$ is replaced by $d(q)^3$ and $B(B-1)^3$ is replaced by $B^4$. We note that $d(q)^3=(r_1+1)^3\ldots (r_s+1)^3$, and that $\binom{r+3}{3}< (r+1)^3$ for $r\geq 1$, so our result is an improvement for every $q$.
\end{rem}
\begin{proof}
    The triangle inequality yields $$\sum_{x=1}^{q}\left\lvert \sum_{b=1}^{B}\chi(x+b)\right\rvert^4\leq \sum_{1\leq b_1,\ldots,b_4\leq B}\left\lvert \sum_{x=1}^{q}\chi(x+b_1)\chi(x+b_2)\bar{\chi}(x+b_3)\bar{\chi}(x+b_4)\right\rvert.$$ Suppose that $\#\{b_1,b_2,b_3,b_4\}\leq 2$; this occurs in at exactly $B+\binom{B}{2}(2^4-2)=7B^2-6B$ ways. Bounding the sum over $x$ trivially gives a contribution of $(7B^2-6B)q$. Supposing now that $\#\{b_1,b_2,b_3,b_4\}\geq 3$, we apply Lemma \ref{prop:Weil} with $f(x)=(x+b_1)(x+b_2)(x+b_3)^{\varphi(q)-1}(x+b_4)^{\varphi(q)-1}$: \begin{align*}\sum_{\substack{1\leq b_1,b_2,b_3,b_4 \leq B\\ \#\{b_1,b_2,b_3,b_4\}\geq 3}}\left\lvert \sum_{x=1}^{q}\chi(f(x))\right\rvert &\leq 8^{\omega(q)}\sqrt{q}\sum_{1\leq b_1,\ldots,b_4\leq B}\sum_{\substack{1\leq i\leq 4\\ A_i\neq 0}} (q,A_i)\\
    &\leq 4\cdot 8^{\omega(q)}\sqrt{q}\sum_{\substack{1\leq b_1,\ldots,b_4\leq B\\A_1\neq 0}}(q,A_1)\\
    &\leq 4\cdot 8^{\omega(q)} \sqrt{q} \sum_{\substack{1\leq b_1,\ldots, b_4 \leq B\\A_1\neq 0}}\sum_{\substack{k\mid q\\k\mid A_1}}k\\
    &=4\cdot 8^{\omega(q)}\sqrt{q} \sum_{k\mid q}k\sum_{\substack{1\leq b_1,\ldots,b_4\leq B\\A_1\neq 0 \\ A_1\equiv 0\pmod{k}}}1.
    \end{align*}
    Now, we estimate the inner sum. We must count the number of ways that $A_1=(b_1-b_2)(b_1-b_3)(b_1-b_4)$ can be nonzero and a multiple of $k$. Choose $b_1$; this can be done in $B$ ways. Write $k=k_2 k_3 k_4$ (this can be done in $d_3(k)$ ways); we would like to arrange for $k_2\mid b_1-b_2$, $k_3\mid b_1-b_3$ and $k_4\mid b_1-b_4$. For this, we need $b_1-b_2$ to be a nonzero multiple of $k_2$, and we know it is contained in $\{b_1-B,b_1-B+1,\ldots,b_1-1\}$. This is a set of $B$ consecutive integers containing $0$, so by Lemma \ref{prop:counting} it contains at most $\frac{B-1}{k_2}$ nonzero multiples of $k_2$. Choosing $k_3$ and $k_4$ can similarly be done in at most $\frac{B-1}{k_3}$ and $\frac{B-1}{k_4}$ ways respectively. The inner sum is therefore at most $$B\cdot d_3(k)\cdot \frac{(B-1)^3}{k_2 k_3 k_4}=B(B-1)^3\frac{d_3(k)}{k}.$$ We get $$\sum_{\substack{1\leq b_1,b_2,b_3,b_4 \leq B\\ \#\{b_1,b_2,b_3,b_4\}\geq 3}}\left\lvert \sum_{x=1}^{q}\chi(f(x))\right\rvert \leq 4\cdot 8^{\omega(q)} B(B-1)^3 \sqrt{q} \sum_{k\mid q}  d_3(k);$$ combining this with $(7B^2-6B)q$ yields the result if we can show that $T(q)=\sum_{k\mid q} d_3(k)$. It is easy to see that $d_3(k)$ is multiplicative with respect to $k$, and that $d_3(p^m)=\binom{m+2}{2}$ for a prime power $p^m$. It follows that $\sum_{k\mid q} d_3(k)$ is multiplicative with respect to $q$, and that $$\sum_{k\mid p^m}d_3(k)=\sum_{0\leq j \leq m}d_3(p^j)=\binom{2}{2}+\binom{3}{2}+\ldots+\binom{m+2}{2},$$ which can be seen (via Pascal's triangle, for example) to equal $\binom{m+3}{3}$.
    \end{proof}
We will encounter the function $\nu(x)$, which counts the number of ways to write $n\equiv ax\pmod{q}$ using a prime $a\leq A$ and an integer $n\in [M+1,M+N]$. We now give an upper bound for its second moment, $\sum_x \nu(x)^2$. Write $\li(x)$ for the logarithmic integral function, defined by the Cauchy principal value of the integral $\int_0^{x}\frac{1}{\log{t}}\,dt$.
\begin{lemma} \label{prop:secondMoment}
    Let $A$, $N$ and $q$ be positive integers with $AN<q/2$ and $A\geq 3$. Then $$\sum_{x=1}^{q} \nu(x)^2\leq \pi(A)^2+2N\li{\pi(A)}+N\pi(A)+U(q,A),$$ where $U(q,A):=\sum_{\substack{p\mid q\\ p\leq A}}(p-1)$ and $\li$ is the logarithmic integral function.
\end{lemma}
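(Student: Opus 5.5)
The plan is to expand the second moment as a count of solutions. By definition $\nu(x)=\#\{(a,n): a\le A \text{ prime},\ n\in[M+1,M+N],\ n\equiv ax \pmod q\}$, so
$$\sum_{x=1}^{q}\nu(x)^2=\#\{(x,a_1,n_1,a_2,n_2): n_1\equiv a_1x,\ n_2\equiv a_2x \pmod q\}.$$
I will split according to whether $a_1=a_2$ and whether the primes involved divide $q$. If $a_1\nmid q$, then $x$ is determined by $n_1$ modulo $q$. The pair $(n_1,n_2)$ is then admissible exactly when $a_2n_1\equiv a_1n_2\pmod q$. Primes $p\mid q$ with $p\le A$ behave differently: the congruence $px\equiv n$ has either $0$ or $p$ solutions. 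These degenerate cases are what the term $U(q,A)=\sum_{p\mid q,\,p\le A}(p-1)$ is meant to absorb, since each such $p$ produces at most $p-1$ extra values of $x$ beyond the generic count. I will treat them separately and check that the surplus is bounded by $U(q,A)$; I expect this bookkeeping to need care but no new ideas.

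\emph{Diagonal terms, $a_1=a_2=a$ with $a\nmid q$.} The congruence reduces to $n_1\equiv n_2\pmod q$. Since $N<q$, this forces $n_1=n_2$. This gives $N$ solutions for each of the $\pi(A)$ primes, hence the term $N\pi(A)$.

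\emph{Off-diagonal terms, $a_1\ne a_2$.} Write $n_i=M+m_i$ with $1\le m_i\le N$. The congruence becomes $a_2m_1-a_1m_2\equiv (a_1-a_2)M\pmod q$. The left side is an integer of absolute value less than $AN<q/2$, so it lies in an interval of length less than $q$. Hence it must equal one fixed integer $c=c(a_1,a_2)$, if any exists. The linear equation $a_2m_1-a_1m_2=c$ with distinct primes $a_1,a_2$ determines $m_1$ modulo $a_1$ and $m_2$ modulo $a_2$. The number of solutions is therefore at most $N/\max(a_1,a_2)+1$. Summing over ordered pairs, writing $p_k$ for the $k$-th prime and $K=\pi(A)$, gives at most
$$\pi(A)^2-\pi(A)+2N\sum_{k=2}^{K}\frac{k-1}{p_k}.$$
The first two terms are dominated by $\pi(A)^2$.

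\emph{Main obstacle.} The remaining step, which I expect to be the real difficulty, is the explicit inequality
$$\sum_{k=2}^{K}\frac{k-1}{p_k}\le \li(K).$$
I would first try the Rosser-type lower bound $p_k\ge k\log k$, valid for all $k\ge 2$ or after checking small $k$ by hand. This gives $\frac{k-1}{p_k}\le\frac{k-1}{k\log k}<\frac1{\log k}$. Next I would compare $\sum_{k=2}^K\frac{1}{\log k}$ with $\li(K)$, using the monotonicity of $1/\log t$:
$$\sum_{k=3}^{K}\frac1{\log k}\le\int_2^K\frac{dt}{\log t}=\li(K)-\li(2).$$
Since $\li(2)\approx1.045$, the leftover $k=2$ term has to be recovered. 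The factor $\frac{k-1}{k}$ supplies a saving of $\frac{1}{k\log k}$ at each $k$, and summing these savings should more than compensate; alternatively, the bound $p_k\ge k(\log k+\log\log k-1)$ gives extra room. If the inequality fails for very small $K$, I would use the hypothesis $A\ge3$, which gives $K\ge2$, and verify the finitely many remaining cases directly. Adding the diagonal, off-diagonal and degenerate contributions then yields the stated bound.
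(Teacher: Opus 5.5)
Your decomposition matches the paper's. You count tuples $(x,a_1,n_1,a_2,n_2)$ and use $AN<q/2$ to turn $a_2m_1-a_1m_2\equiv(a_1-a_2)M \pmod q$ into an exact linear equation. You then bound the pairs $(m_1,m_2)$ by $1+N/\max(a_1,a_2)$ and sum to $\pi(A)^2+N\pi(A)+2N\sum_{k=2}^{K}(k-1)/p_k$ plus a surplus.

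\textbf{The gap.} It lies in what you set aside as routine bookkeeping. That is precisely where the composite modulus needs an argument, and your heuristic (``$px\equiv n$ has $0$ or $p$ solutions'') does not supply it.

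\emph{Off-diagonal, both primes dividing $q$.} Your off-diagonal estimate counts pairs $(m_1,m_2)$. It counts tuples only if each admissible pair determines $x$. That is clear when one of $a_1,a_2$ is coprime to $q$. But when $a_1\neq a_2$ both divide $q$, each congruence alone has $a_i$ solutions in $x$. A prime-by-prime count would then allow up to $\min(a_1,a_2)$ values of $x$ per pair. That gives a surplus proportional to $N$ for every pair of distinct prime divisors of $q$ up to $A$, which $U(q,A)$, being independent of $N$, cannot absorb. The missing step is to use both congruences jointly. If $x_1,x_2$ satisfy both, then $x_1\equiv x_2$ modulo $q/a_1$ and modulo $q/a_2$. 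Since $a_1\neq a_2$ are primes dividing $q$, $\mathrm{lcm}(q/a_1,q/a_2)=q$, so $x$ is unique and there is no surplus off the diagonal.

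\emph{Diagonal, $a_1=a_2=p\mid q$.} Here you must use that $n_1=n_2$ is forced to be a multiple of $p$. There are at most $\lceil N/p\rceil\le (N+p-1)/p$ such $n$, each giving exactly $p$ values of $x$. That is at most $N+p-1$ tuples, a surplus of $p-1$ over your generic $N$; without the divisibility restriction the surplus would be $(p-1)N$. With these two observations your count does give the lemma. These are exactly the second and third cases of the paper's proof.

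\textbf{The $\li$ inequality.} The step you flag as the real difficulty is easy, and your route is better than the paper's. Rosser's bound $p_k>k\log k$ holds for all $k\ge 1$, and keeping the factor $(k-1)/k$ only at $k=2$ already suffices: $\sum_{k=2}^{K}\frac{k-1}{p_k}<\frac{1}{2\log 2}+\sum_{k=3}^{K}\frac{1}{\log k}\le \frac{1}{2\log 2}+\int_2^K\frac{dt}{\log t}=\li(K)-\bigl(\li(2)-\tfrac{1}{2\log 2}\bigr)$. Since $\li(2)\approx 1.045>0.721\approx \frac{1}{2\log 2}$, the inequality holds for every $K\ge 2$ with no computation. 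The paper instead uses $\frac{k-1}{p_k}<\frac{1}{\log k}-\frac{1}{p_k}$, which is termwise weaker than your bound. It then needs a Rosser--Schoenfeld lower bound for $\sum_{i\le K}1/p_i$, valid only for $K\ge 12$, plus a computer check for $2\le K\le 11$.
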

\begin{proof} In what follows, unless otherwise specified, all modular equivalences will be modulo $q$.
We have $$\sum_{x=1}^{q}\nu(x)^2=\sum_{x=1}^{q}\left[\sum_{\substack{a\leq A\\a\text{ prime}}}\sum_{\substack{M+1\leq n \leq M+N\\n\equiv ax}}1\right]^2
=\sum_{a_1,a_2,n_1,n_2}\sum_{\substack{1\leq x\leq q\\ n_1\equiv a_1x\\ n_2\equiv a_2x}}1.$$ 
Fix $a_1$ and $a_2$, and write $M+n_1'=n_1$ and $M+n_2'=n_2$ for $n_1',n_2'\in[1,N]$. The condition in the inner sum is then equivalent to \begin{align}\label{eq:secondMomentCongruence}M+n_1'\equiv a_1 x,M+n_2'\equiv a_2 x \pmod{q},\end{align} which implies that $(a_1-a_2)M+(n_2' a_1-n_1' a_2)\equiv 0.$ We select $k\equiv (a_1-a_2)M$ such that $\lvert k \rvert \leq q/2$. Since $AN<q/2$, we must have $\lvert n_1'a_2-n_2'a_1 \rvert <q/2$; from which it follows that $n_1'a_2-n_2'a_1=k$. If this is solved by $n_1'=Y$ and $n_2'=Z$, then the general solution is of the form $n_1'=Y+\frac{a_1}{(a_1,a_2)}h$, $n_2'=Z+\frac{a_2}{(a_1,a_2)}h$, where $h\in \Z$. Next, we count the number of ways to choose $n_1',n_2'$, then the number of ways to choose $x$ after this choice. We consider three different cases:
\begin{itemize}
    \item \textbf{At least one of $a_1$ and $a_2$ does not divide $q$.} Without loss of generality, suppose that $a_1$ does not divide $q$. Because $n_1'$ and $n_2'$ must be in $[1,N]$, we can choose $h$ above in at most $$\min\left(1+\frac{N}{a_1/(a_1,a_2)},1+\frac{N}{a_2/(a_1,a_2)}\right)=1+N\frac{(a_1,a_2)}{\max(a_1,a_2)}$$ ways. Having chosen $(n_1',n_2')$, we must select $x\equiv a_1^{-1}(M+n_1')$ (i.e.\ there is only one option).
    \item \textbf{Both $a_1$ and $a_2$ divide $q$, but $a_1\neq a_2$.} As before, there are at most $1+N\frac{(a_1,a_2)}{\max(a_1,a_2)}$ ways to select $h$. We claim that there is at most one $x$ satisfying \eqref{eq:secondMomentCongruence}: suppose that both $x_1$ and $x_2$ satisfy \eqref{eq:secondMomentCongruence}, so that $a_1x_1\equiv a_1x_2$ and $a_2x_1\equiv a_2x_2$. One may write $q=q_1q_2$ such that $(q_1,q_2)=1$ and $a_1\nmid q_2$, $a_2 \nmid q_1$. Then $x_1\equiv x_2\pmod{q/a_1}$, but $q/a_1=q_2\cdot (q_1/a_1)$, so $x_1\equiv x_2 \pmod{q_2}$; similarly, $x_1\equiv x_2\pmod{q_1}$. By the Chinese remainder theorem, $x_1\equiv x_2 \pmod{q}$.
    \item \textbf{Both $a_1$ and $a_2$ divide $q$, and $a_1=a_2$.} Here, we have $M+n_1'\equiv a_1 x\equiv M+n_2'\implies n_1'=n_2'$. Now, $M+n_1'\in \{M+1,\ldots, M+N\}$ must be a multiple of $a_1$, which occurs in at most $1+\frac{N}{a_1}$ ways. Once we choose $n_1'$, there are $a_1$ ways to choose $x$.
\end{itemize}

Putting all of this together, we have \begin{align*}
    \sum_{x=1}^{q}\nu(x)^2&\leq \sum_{a_1\neq a_2}\left(1+N\frac{(a_1,a_2)}{\max{(a_1,a_2)}}\right)+\sum_{\substack{a_1=a_2\\ a_1\nmid q}}\left(1+N\frac{(a_1,a_2)}{\max{(a_1,a_2)}}\right)+\sum_{\substack{a_1=a_2\\ a_1\mid q}}(a_1+N) \\
    &=\sum_{a_1\neq a_2}\left(1+\frac{N}{\max{(a_1,a_2)}}\right)+\sum_{a_1\nmid q}(1+N)+\sum_{a_1\mid q}(a_1+N)\\
    &=2\sum_{a_1<a_2}\left(1+\frac{N}{a_2}\right)+\sum_{a_1}(1+N)+\sum_{a_1\mid q}(a_1-1).
    \end{align*} The first two sums give \begin{align*}
    &2\sum_{2<a_2}(\pi(a_2)-1)\left(1+\frac{N}{a_2}\right)+\pi(A)(1+N)\\&= 2\sum_{2\leq i\leq \pi(A)}\left(i+\frac{Ni}{p_i}-1-\frac{N}{p_i}\right)+\pi(A)(1+N) \\
    &=2\left[\frac{\pi(A)(\pi(A)+1)}{2}-1-(\pi(A)-1)+N\sum_{2\leq i \leq \pi(A)} \frac{i-1}{p_i}\right] +\pi(A)(1+N)\\
    &=\pi(A)^2+N\pi(A)+2N\sum_{2\leq i\leq \pi(A)}\frac{i-1}{p_i}.
\end{align*} If we can show that $\sum_{2\leq i \leq \pi(A)}\frac{i-1}{p_i}\leq \li{\pi(A)}$, we will be done. By computation, we verify that this holds when $2\leq \pi(A)\leq 11$. Otherwise, we have \begin{align*}
    \sum_{2\leq i \leq \pi(A)}\frac{i-1}{p_i}&<\sum_{2\leq i \leq \pi(A)}\frac{1}{\log{i}}-\left(-\frac{1}{2}+\sum_{1\leq i \leq \pi(A)}\frac{1}{p_i}\right), \text{ by \cite[(3.12)]{RS1962},}\\
    &< \frac{1}{\log{2}}+\int_{2}^{\pi(A)}\frac{1}{\log{t}}\,dt -\log{\log{\pi(A)+\frac{1}{2}}}, \text{ by \cite[(3.19)]{RS1962},}\\
    &=\li{\pi(A)}+\frac{1}{\log{2}}-\li(2)+\frac{1}{2}-\log{\log{\pi(A)}} \\
    &\leq \li{\pi(A)}\text{, if }\pi(A)\geq 12.
\end{align*}
\end{proof}

The function $U(q,A)$ appeared in the previous lemma; we will need an upper bound for it later on, which we now give. We write $\theta(x)$ for the Chebyshev theta function.
\begin{lemma} \label{prop:U(q,A)}
    Let $q$ be a positive integer, and let $A$ be an integer greater than or equal to $\max(3,\frac{1}{36}q^{1/8}-1)$, and let $U(q,A)$ be as in Lemma \ref{prop:secondMoment}. Then $U(q,A)< 1.68\pi(A)^2$.
\end{lemma}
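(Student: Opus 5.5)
The idea is that $U(q,A)$ cannot be as large as the sum of all primes up to $A$, because the primes counted in $U(q,A)$ all divide $q$, and $q$ is bounded in terms of $A$. The hypothesis $A\ge \frac{1}{36}q^{1/8}-1$ gives
$$q\le \bigl(36(A+1)\bigr)^8=:Q(A).$$
Any set of distinct primes dividing $q$ has product at most $Q(A)$. So $U(q,A)$ is at most
$$U^*(A):=\max\Bigl\{\sum_{p\in P}(p-1)\;:\;P\subseteq\{p\le A\text{ prime}\},\ \prod_{p\in P}p\le Q(A)\Bigr\}.$$
Replacing a smaller prime by a larger one raises the sum by more, relative to the increase in the product. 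Hence this maximum is essentially attained, and in any case bounded, by a greedy choice: take the primes $p\le A$ in decreasing order while their product stays $\le Q(A)$. It then suffices to show $U^*(A)<1.68\pi(A)^2$ for every integer $A\ge3$.

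\textbf{Large $A$.} Fix a cutoff $y<A$, to be chosen. Each prime of $P$ exceeding $y$ contributes at least $\log y$ to $\log\prod_{p\in P}p\le\log Q(A)$. So there are at most $8\log(36(A+1))/\log y$ such primes, each contributing at most $A-1$. The primes of $P$ that are $\le y$ contribute at most $\sum_{p\le y}(p-1)\le y\,\pi(y)$. This gives
$$U(q,A)\le y\,\pi(y)+\frac{8(A-1)\log(36(A+1))}{\log y}.$$
Take $y=A/\log A$, or perhaps $y=\sqrt A$ if that is cleaner. The first term is then $O(A^2/\log^3A)$ and the second is $O(A)$. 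Both are small compared with $\pi(A)^2\ge A^2/\log^2A$, the latter by Rosser--Schoenfeld ($\pi(x)>x/\log x$ for $x\ge17$). Using Rosser--Schoenfeld's explicit upper bound $\pi(x)<1.25506\,x/\log x$ for $\pi(y)$, one gets an explicit $A_0$ beyond which $U(q,A)<1.68\pi(A)^2$; I would expect the resulting ratio to be well below $1.68$ once $A$ is moderately large.

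\textbf{Small $A$ and the main obstacle.} For $3\le A<A_0$, compute $U^*(A)/\pi(A)^2$ directly with the greedy procedure. When $\prod_{p\le A}p=e^{\theta(A)}\le Q(A)$, the maximum is simply $\sum_{p\le A}(p-1)$. This regime holds for $A$ up to roughly $8\log(36A)$, and it is where the ratio should peak, since the sum of primes grows faster than $\pi(A)^2$ there. The constant $1.68$ presumably comes from the worst case in this window, probably near $A\approx 100$ or a bit beyond, where the bound $\theta(A)\le\log Q(A)$ first fails.

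The main obstacle is making $A_0$ small enough for the finite check to be feasible. This means tuning $y$ and using sharp explicit bounds for $\pi$ and $\theta$ in the analytic range so that the two ranges meet. I would first try $y=\sqrt{A}$ with the crude bound $\sum_{p\le y}(p-1)\le y^2/2$, and refine only if $A_0$ comes out too large to verify by computer.
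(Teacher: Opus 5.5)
Your reduction to $q\le Q(A)=(36(A+1))^8$ is correct, and so is your large-$A$ estimate. The latter is a genuinely different and sharper bound than the paper's. You split at $y=\sqrt{A}$ and charge every prime $>y$ at least $\log y$ of the budget $\log Q(A)$. This gives $U(q,A)\le y\pi(y)+16(A-1)\log(36(A+1))/\log A$, which is below $1.68\pi(A)^2$ once $A$ is beyond a few hundred. The paper instead uses only that $q$ has fewer than $t$ prime factors when $p_1\cdots p_t>Q(A)$, so $U(q,A)<tA$ with $t=\lceil 3.63\log(36(A+1))\rceil$, and it applies this only for $A\ge 2000$.

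\textbf{The gap is in your finite range.} You certify $U^*(A)<1.68\pi(A)^2$ by computing the greedy value, and you assert that $U^*(A)$ is ``in any case bounded'' by it. That is false for this $0$--$1$ knapsack problem. Monotonicity of $(p-1)/\log p$ only shows that greedy is optimal for the \emph{fractional} relaxation, and for the integer problem greedy can undershoot. For example, take the primes up to $127$ with product cap $113\cdot 109=12317$. Greedy takes $127$, skips $113,\dots,97$ (since $127\cdot 97=12319$), takes $89$, and stops, for value $126+88=214$. But $\{113,109\}$ fits and has value $220$. So a greedy computation is only a lower estimate for $U^*(A)$ and does not prove the lemma.

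There are two easy repairs.
\begin{itemize}
\item Use the fractional (LP) bound. Let $p^*$ be the first prime in decreasing order that does not fit. Add to the greedy prefix's value the fraction $\bigl(\log Q(A)-\sum_{\mathrm{prefix}}\log p\bigr)/\log p^*$ of $p^*-1$.
\item Do what the paper does. If $t$ is least with $p_1\cdots p_t>Q(A)$, then at most $\hat t=\min(t-1,\pi(A))$ primes up to $A$ divide $q$. Hence $U(q,A)$ is at most the sum of the $\hat t$ largest primes up to $A$.
\end{itemize}
The paper's route is cruder but needs no optimisation argument at all.

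It is also where $1.68$ comes from, which differs from your guess. That bound peaks at about $1.678$ for $A=104$--$106$, just after $\log Q(A)$ overtakes $\theta(73)$. The peak is not in the window where all primes up to $A$ fit: that window ends at $A=72$, where the ratio is only about $1.55$. The LP-corrected knapsack bound is much smaller at the paper's worst case (about $1.48$ at $A=104$). So your formulation, once made rigorous, has more room than the paper's there.
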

\begin{proof}
Fix $A$; we have $q\leq \left[36(A+1)\right]^8$. Let $k=\pi(A)$; $U(q,A)$ does not exceed the sum of the primes among $\{2,3,5,\ldots,p_k\}$ that divide $q$. If we find a $t$ with $\prod_{i=1}^{t}p_i> q$, then there are fewer than $t$ such primes, so $\frac{U(q,A)}{\pi(A)^2}< \frac{tp_k}{k^2}$. This will occur if $\prod_{i=1}^{t}p_i=\exp(\theta(p_t))> \left[36(A+1)\right]^8,$ and we claim that this occurs if $t=\ceil{3.63\log{[36(A+1)]}}$. Now, $t\geq 19$ (therefore $p_t\geq 67$), so by \cite[(3.16) and (3.12)]{RS1962} we have $$\theta(p_t)>p_t\left(1-\frac{1}{\log{p_t}}\right)>0.76217p_t>0.76217t\cdot \log{19}>8\log{[36(A+1)]},$$ as needed. Assume that $A\geq 2000$; via \cite[(3.1)]{RS1962}, we get \begin{align*}\frac{U(q,A)}{\pi(A)^2}<\frac{3.63A\log{\left[36(A+1)\right]}}{\pi(A)^2}&<\frac{3.63(\log^4{A}) \log{\left[36(A+1)\right]}}{A\left(\frac{1}{2}+\log{A}\right)^2} \\
&<3.63\frac{\log^2{A}}{\sqrt{A}}\frac{\log[36(A+1)]}{\sqrt{A}}\\
&<3.63\times 1.2919 \times 0.2502<1.174.
\end{align*} For $3\leq A< 2000$, we proceed computationally; for a given $A$, find the least $t$ such $2\cdot3\cdot\ldots\cdot p_t>q$ and pick $\hat{t}=\min(t-1,k)$. Then $$\frac{U(q,A)}{\pi(A)^2}\leq \frac{p_{k-\hat{t}+1}+p_{k-\hat{t}+2}+\ldots +p_{k}}{k^2}.$$ We calculate the right-hand side for every such $A$; every value is less than 1.68.
\end{proof}
We will encounter some expressions involving $\li(x)$ and $\pi(x)$, which we now bound.
\begin{lemma} \label{prop:piEstimate1}
    For an integer $A\geq 7$, we have $\frac{A+1}{\pi(A)}<0.45+\log{A}$.
\end{lemma}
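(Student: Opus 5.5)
We need $\pi(A)>\frac{A+1}{0.45+\log A}$ for integers $A\ge 7$. The plan is to split into a large range, where an explicit lower bound of Rosser and Schoenfeld does the work, and a finite range checked by computation.

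\textbf{Large range.} I would invoke the Rosser--Schoenfeld lower bound \cite[(3.5)]{RS1962},
$$\pi(x)>\frac{x}{\log x-\frac12}\qquad (x\ge 67).$$
Since
$$\frac{A}{\log A-\frac12}\ge\frac{A+1}{\log A+0.45}\iff A(\log A+0.45)\ge (A+1)\left(\log A-\tfrac12\right)\iff 0.95A\ge \log A-\tfrac12,$$
and the last inequality holds trivially for every $A\ge 1$, the claim follows at once for all integers $A\ge 67$.

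\textbf{Small range.} For $7\le A\le 66$, I would verify $(A+1)/\pi(A)<0.45+\log A$ directly. Both $\pi(A)$ and the right-hand side are monotone, so it suffices to check the worst points. These are the integers $A$ just before a prime, where $\pi(A)$ is smallest relative to $A+1$; equivalently, take $A=p-1$ for consecutive primes $p\le 67$, together with $A=7$. Some sample values:
\begin{itemize}
\item $A=7$: $8/4=2$, while $0.45+\log 7\approx 2.396$.
\item $A=10$: $11/4=2.75<2.753$, which is tight.
\item $A=12$: $13/5=2.6<2.93$.
\item $A=22$: $23/8=2.875<3.54$.
\end{itemize}
For larger $A$ the margin grows. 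The check amounts to a short table and is purely mechanical.

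\textbf{Main obstacle.} The constant $0.45$ is nearly sharp at $A=10$. The main task is therefore to ensure that the finite check is done carefully (perhaps by computer, as elsewhere in the paper) and that the cutoff of the cited Rosser--Schoenfeld inequality is respected. If one prefers (3.5) only for $x\ge 67$, the computational range is $7\le A\le 66$, as above. There is also the alternative (3.2)-type bound $\pi(x)>x/\log x$ for $x\ge 17$, which would require the same verification. That route fails, however: at $A=17$ it would need $18\log 17\le 17(\log 17+0.45)$, i.e. $\log 17\le 7.65$, which is fine, but in general it would need $(A+1)\log A<A(\log A+0.45)$, i.e. $\log A<0.45A$, and that in fact holds for all $A\ge 1$ anyway. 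So either citation works, and I would use whichever keeps the computational range shortest.
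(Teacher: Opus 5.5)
Your argument is correct and is essentially the paper's: an explicit Rosser--Schoenfeld lower bound for $\pi(A)$ beyond a threshold, followed by a finite computational check of the remaining $A$. The paper uses $\pi(x)>\frac{x}{\log x}\bigl(1+\frac{1}{2\log x}\bigr)$ for $x\ge 59$, whereas you use $\pi(x)>x/(\log x-\frac12)$ for $x\ge 67$.

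Two small points. First, your equation labels are off: the paper cites (3.5) for $\pi(x)>x/\log x$, $x\ge 17$, and that bound, as your last paragraph eventually concedes, also works and shortens the check to $7\le A\le 16$. Second, the reduction to $A=p-1$ is valid not merely because both sides are monotone, but because $(A+1)/k-\log A$ increases in $A$ on each block where $\pi(A)=k<A$.
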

\begin{proof}
    By \cite[(3.1)]{RS1962}, we have $\frac{A}{\log{A}}\left(1+\frac{1}{2\log{A}}\right)<\pi(A)$ for $A\geq 59$; thus $$\frac{A+1}{\pi(A)}-\log{A}<\frac{(\log{A})(\log{A}-A/2)}{(\log{A}+1/2)A}<\frac{\log{A}}{A}-\frac{1}{2},$$ which is negative for $A\geq 59$. We check the remaining values of $A$ via computation.
\end{proof}

\begin{lemma} \label{prop:piEstimate2}
    For an integer $A\geq 7$, we have $\frac{(A+1)\li{\pi(A)}}{\pi(A)^2}<2.05$.
\end{lemma}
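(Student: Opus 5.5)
We split the product into two factors, each controlled by an explicit prime-counting estimate, and handle small $A$ by computation, exactly as in Lemma \ref{prop:piEstimate1}. Write
$$\frac{(A+1)\li{\pi(A)}}{\pi(A)^2}=\frac{A+1}{\pi(A)}\cdot\frac{\li{\pi(A)}}{\pi(A)}.$$
By Lemma \ref{prop:piEstimate1}, the first factor is less than $0.45+\log{A}$. For the second factor, write $y=\pi(A)$. The standard bound $\li(y)<\frac{y}{\log y}\bigl(1+\frac{c}{\log y}\bigr)$ holds for $y$ beyond some explicit point with an explicit constant $c$ (say $c=1.5$ for $y$ large enough, since $\li(y)\sim \frac{y}{\log y}(1+\frac{1}{\log y}+\cdots)$). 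This can be justified by differentiating $\frac{y}{\log y}(1+\frac{c}{\log y})-\li(y)$ and checking it at one point. It gives
$$\frac{\li{\pi(A)}}{\pi(A)}<\frac{1}{\log \pi(A)}\left(1+\frac{c}{\log\pi(A)}\right).$$

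Next we need a lower bound for $\log\pi(A)$ in terms of $\log A$. From \cite[(3.1)]{RS1962} we have $\pi(A)>A/\log A$ for $A\geq 17$, so $\log\pi(A)>\log A-\log\log A$. The product is then bounded by
$$\frac{(0.45+\log A)\bigl(1+\frac{c}{\log A-\log\log A}\bigr)}{\log A-\log\log A}.$$
This tends to $1$ as $A\to\infty$, but only slowly, because $\log\log A/\log A$ decays slowly. The main obstacle is to find an explicit threshold $A_0$ beyond which this expression is below $2.05$, while keeping $A_0$ small enough for the finite check to be feasible.

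A rough estimate suggests $A_0$ around $10^4$ to $10^5$ should suffice. At $A=10^5$ we have $\log A\approx 11.5$ and $\log A-\log\log A\approx 9.07$, so the expression is about $1.32\times1.17\approx1.54$. If needed, one can instead use the sharper $\pi(A)>\frac{A}{\log A}(1+\frac{1}{2\log A})$, which again comes from \cite[(3.1)]{RS1962}. To verify the inequality for all $A\geq A_0$, we show the bounding expression is decreasing in $\log A$ for $\log A$ beyond a modest value, and evaluate it at $A_0$.

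Finally, for $7\leq A<A_0$ we compute $\frac{(A+1)\li{\pi(A)}}{\pi(A)^2}$ directly. Since $\pi(A)$ is constant between consecutive primes and $A+1$ increases, it suffices to evaluate at $A=p-1$ for each prime $p$ up to $A_0+1$. We expect the maximum of $2.05$ to occur at small $A$, near $A=7$ or $A=10$. The relevant values are $\li(4)\approx1.92$, giving $8\cdot1.92/16\approx0.96$, and $\li(2)\approx1.045$. Since these are well below $2.05$, the computational range presents no difficulty.
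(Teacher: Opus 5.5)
Your skeleton is the paper's. You factor the quantity, bound $\li(y)$ by $\frac{y}{\log y}\bigl(1+\frac{c}{\log y}\bigr)$, use $\pi(A)>A/\log A$ to get $\log\pi(A)>\log A-\log\log A$, argue monotonicity, and finish by computer. The paper takes $c=2$ from the bound $\li(x)<\frac{x}{\log x}+\frac{2x}{\log^2 x}$, valid for all $x>1$, together with $A/\pi(A)<\log A$. This already gives less than $1.95$ for $A\ge 1000$.

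Your choice $c=1.5$ is false on a range of moderate $y$ extending up to about $\log y\approx 7.5$. For example, at $y=\pi(10^4)=1229$ one has $\li(1229)\approx 210.3$, while your bound gives about $209.2$. So $A_0=10^4$ is not admissible. $A_0=10^5$ does work, and your estimate of about $1.54$ there is correct.

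The real error is in your treatment of the finite range. The value $\li(4)\approx 1.92$ you used is the offset integral $\li(4)-\li(2)$. With the principal-value $\li$ defined in the paper, $\li(4)\approx 2.968$. You also evaluated at $A=7$, although by your own reduction the maximum on the block where $\pi(A)=4$ occurs at $A=10$. (The value $\li(2)$ is irrelevant, since $\pi(A)\ge 4$ throughout.)

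The true values are $11\li(4)/16\approx 2.040$ at $A=10$ and $29\li(9)/81\approx 2.048$ at $A=28$. So the constant $2.05$ is essentially sharp. The finite check is where the content of the lemma lies, not a formality with values \emph{well below} $2.05$.

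Your plan does prove the lemma, provided the computation over $7\le A<A_0$ uses the correct $\li$. Done with the offset version, it would verify the wrong function and give false reassurance.
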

\begin{proof}
    We have $\li(x)<\frac{x}{\log{x}}+\frac{2x}{\log^2{x}}$ for all $x>1$ (see \cite[Lemma 2.4]{Johnston2023}). We now make repeated use of \cite[(3.5)]{RS1962}, which states that $\pi(x)>\frac{x}{\log{x}}$ for $x\geq 17$. If $A\geq 17$, then\begin{align*}
        \frac{A+1}{A}\cdot \frac{A}{\pi(A)}\cdot \frac{\li{\pi(A)}}{\pi(A)}&<\frac{A+1}{A}\cdot\log{A}\cdot\left(\frac{1}{\log{A}-\log{\log{A}}}+\frac{2}{(\log{A}-\log{\log{A}})^2}\right)\\
        &=\frac{A+1}{A}\left(\frac{1}{1-\frac{\log{\log{A}}}{\log{A}}}+\frac{\frac{2}{\log{A}}}{\left(1-\frac{\log{\log{A}}}{\log{A}}\right)^2}\right) .
    \end{align*} This is decreasing when $A\geq e^e$, so for $A\geq 1000$ it is less than $1.001(1.389+0.559)<1.95.$ We check the remaining values of $A$ via computation.
\end{proof}

The following lemma is essentially trivial:
\begin{lemma} \label{prop:trivialLemma}
    Let $q\geq 1.627\times 10^{19}$ and define $$\lambda_2=2.7155\cdot (8^{\omega(q)}T(q))^{1/4}(2.578+\log{q})^{1/4}.$$ Let $\chi$ be a character of modulus $q$. If $N\leq q^{3/8}$, then $\lvert S(M,N)\rvert \leq N^{1/2} q^{3/16}\lambda_2$.
\end{lemma}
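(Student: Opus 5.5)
The plan is to use only the trivial bound $\lvert S(M,N)\rvert\leq N$. When $N\leq q^{3/8}$ we have $N^{1/2}\leq q^{3/16}$, so $N = N^{1/2}\cdot N^{1/2}\leq N^{1/2}q^{3/16}$. It then suffices to check that $\lambda_2\geq 1$ for every admissible $q$. No Weil-type input (Lemma \ref{prop:Weil}) is needed.

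To see $\lambda_2\geq 1$, bound each factor from below:
\begin{itemize}
\item $8^{\omega(q)}\geq 8$, since $q>1$ has at least one prime factor;
\item $T(q)\geq \binom{4}{3}=4$, since each factor $\binom{r_i+3}{3}$ is at least $4$;
\item $2.578+\log q\geq 2.578+\log(1.627\times 10^{19})>46$.
\end{itemize}
Hence $\lambda_2\geq 2.7155\cdot(8\cdot 4\cdot 46)^{1/4}>2.7155\cdot 6>1$. This holds with a large margin.

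So $\lvert S(M,N)\rvert\leq N\leq N^{1/2}q^{3/16}\leq N^{1/2}q^{3/16}\lambda_2$, which is the claim.

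There is no real obstacle here. The constant $2.7155$ is presumably tuned for later use: it is the constant that makes the case $N\le q^{3/8}$ match the case $N\ge q^{3/8}$ in the proof of Theorem \ref{prop:mainTheorem}. For the present lemma, only $\lambda_2\geq 1$ matters. The single point to verify is that $q>1$, so that $\omega(q)\geq 1$, and this follows from the hypothesis $q\geq 1.627\times 10^{19}$.
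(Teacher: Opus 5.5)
Your proof is correct and is essentially the paper's argument: the trivial bound $\lvert S(M,N)\rvert\leq N\leq N^{1/2}q^{3/16}$ for $N\leq q^{3/8}$, combined with $\lambda_2\geq 1$. Your explicit lower bounds $\omega(q)\geq 1$, $T(q)\geq 4$ and $2.578+\log q>46$ only spell out why $\lambda_2\geq 1$, which the paper asserts without detail.
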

\begin{proof}
    We have $\lvert S(M,N)\rvert \leq N \leq N^{1/2}\sqrt{q^{3/8}}\leq N^{1/2}q^{3/16}\lambda_2$ because $\lambda_2\geq 1$.
\end{proof}
We now prove the main result. Our proof is quite close to that of \cite[Theorem 1.1]{JSKL2021}, so we will be brief except in those places where the proofs differ.
\begin{proof}[Proof of Theorem \ref{prop:mainTheorem}]
    If $q$ is prime, then we have $\lvert S(M,N)\rvert <3.2348 N^{1/2}q^{3/16}(\log{q})^{1/4}$ by \cite[Theorem 1.8]{Francis2021}, and we are done. Now, assume that $q$ is composite.
    If $N \leq q^{3/8}$, we are done by Lemma \ref{prop:trivialLemma} because $\lambda_2<\lambda_1$. We instead suppose that $q^{3/8}< N \leq q^{5/8}$. We let $$\mathcal{M}(y)=\max_{\substack{M,N\\N\leq y}} \lvert S(M,N)\rvert,$$ and shift by positive integers $ab$, to give $$S(M,N)=\sum_{n=M+1}^{M+N}\chi(n+ab)+2\theta \mathcal{M}(ab),$$ for some $\lvert \theta \rvert \leq 1$. We select $A=\floor{\frac{1}{36}Nq^{-1/4}}$ and $B=\floor{q^{1/4}}$; it is easily seen that $A\geq 7$ and $B\geq 1$. In \cite{JSKL2021}, they choose $A=\floor{\frac{1}{10}Nq^{-1/4}}$, but this is not optimal. Average over primes $a\leq A$ and integers $1\leq b \leq B$; then \begin{align}\label{eq:averageOverShifts}S(M,N)=\frac{1}{\pi(A)B}\sum_{n=M+1}^{M+N}\sum_{\substack{a\leq A\\a\text{ prime}}}\sum_{b=1}^{B}\chi(n+ab)+2\theta' \mathcal{M}(AB),\end{align} with $\lvert \theta' \rvert \leq 1$.
    Abbreviate the triple sum to $\sum_{n,a,b}$ and use H\"older's inequality: \begin{align*}\left\lvert \sum_{n,a,b}\chi(n+ab)\right\rvert&=\sum_{x=1}^{q} \nu(x) \left\lvert \sum_{b=1}^{B} \chi(x+b)\right\rvert\\
    \left\lvert \sum_{n,a,b} \chi(n+ab) \right\rvert^4&\leq \left(\sum_{x=1}^{q} \nu(x)\right)^{2}\left(\sum_{x=1}^{q} \nu(x)^2\right)\left(\sum_{x=1}^{q} \left\lvert \sum_{b=1}^{B}\chi(x+b)\right\rvert ^4\right).\end{align*} Now, $\sum_{x=1}^{q}\nu(x)=\pi(A)N$; the second and third sums can be bounded using Lemmas \ref{prop:secondMoment} and \ref{prop:WeilCounting} respectively (noting that $AN<q/2$ as required), to give
    \begin{align}
    \notag \left\lvert \sum_{n,a,b} \chi(n+ab) \right\rvert^4 &\leq \pi(A)^2N^2 \times [\pi(A)^2+2N\li{\pi(A)}+N\pi(A)+U(q,A)]\\
    \notag &\times \left[7B^2q+4\cdot 8^{\omega(q)}B^4T(q)\sqrt{q}\right]\\ \label{eq:averaged}
    \frac{1}{\pi(A)B}\left\lvert \sum_{n,a,b} \chi(n+ab) \right\rvert&\leq N^{1/2} q^{1/8} P(q)\left(1+\frac{2N\li{\pi(A)}}{\pi(A)^2}+\frac{N}{\pi(A)}+\frac{U(q,A)}{\pi(A)^2}\right)^{1/4},
    \end{align} where we have written $$P(q):=\left(7\left(\frac{q^{1/4}}{q^{1/4}-1}\right)^2+4\cdot 8^{\omega(q)}T(q)\right)^{1/4}\leq (7.0003+4\cdot 8^{\omega(q)}T(q))^{1/4}.$$ Using Lemmas \ref{prop:U(q,A)}, \ref{prop:piEstimate1} and \ref{prop:piEstimate2}, the bracketed term in \eqref{eq:averaged} is not more than \begin{align*}&q^{1/4}\left(2.68q^{-1/4}+\frac{72(A+1)\li{\pi(A)}}{\pi(A)^2}+\frac{36(A+1)}{\pi(A)}\right)\\
    &\leq q^{1/4}(0.0000422+72(2.05)+36(0.45+\log{A}))\\
    &\leq q^{1/4}\left(163.8000422+36\left(\frac{3}{8}\log{q}-\log{36}\right)\right)\leq q^{1/4}\left(34.8+\frac{27}{2}\log{q}\right)
    \end{align*}
    Putting this back into \eqref{eq:averaged}, we get \begin{align*}
        \frac{1}{\pi(A)B}\left\lvert \sum_{n,a,b} \chi(n+ab) \right\rvert &\leq \left(\frac{27}{2}\right)^{1/4}N^{1/2}q^{3/16}\left(8^{\omega(q)}T(q)\right)^{1/4}\left(\frac{7.0003}{256}+4\right)^{1/4}(2.578+\log{q})^{1/4}\\
        &\leq 2.7155\cdot N^{1/2}q^{3/16}(8^{\omega(q)}T(q))^{1/4}(2.578+\log{q})^{1/4},
    \end{align*} where we have used the fact that $q$ is composite.
    With $\lambda_2$ as in Lemma \ref{prop:trivialLemma}, \eqref{eq:averageOverShifts} gives $\mathcal{M}(N)\leq N^{1/2}q^{3/16}\lambda_2+2\mathcal{M}(N/36)$ for any positive $N\leq q^{5/8}$. Repeated application of this result yields $$\mathcal{M}(N)\leq N^{1/2}q^{3/16} \lambda_2 \sum_{k=0}^{K}\left(\frac{2}{\sqrt{36}}\right)^k+2^{K+1}\mathcal{M}\left(\frac{N}{36^{K+1}}\right),$$ for any integer $K\geq 0$. Bounding the second term trivially and taking $K\rightarrow \infty$ gives $$\mathcal{M}(N)\leq N^{1/2}q^{3/16}\lambda_2 \left(\frac{1}{1-\frac{2}{\sqrt{36}}}\right)\leq N^{1/2}q^{3/16}\lambda_1.$$

\end{proof}
\section*{Acknowledgements}
The author thanks his supervisor Tim Trudgian for suggesting this project and for his assistance throughout. The author also acknowledges the support of an Australian Government Research Training Program Scholarship.

\bibliographystyle{plain}
\bibliography{refs}
\end{document}